\newenvironment{rk}{\vspace{2 ex}{\noindent{\bf Remark}}}
        {\vspace{2 ex}}
\newtheorem{lemma}{Lemma}
\newtheorem{theorem}{Theorem}
\newtheorem{DEF}{Definition}
\newcommand{\nab}{\nabla}
\newcommand{\R}{\mathbb{R}}
\newcommand{\W}{\Omega}
\newcommand{\p}{\partial}
\newcommand{\s}{\sigma}
\newcommand{\la}{\lambda}
\newcommand{\G}{\Gamma}
\newcommand{\D}{\Delta}
\newcommand{\id}{\mathrm{d}}
\begin{document}
\openup0.4\jot

\pagestyle{plain}

\title{Solutions to a two-dimensional, Neumann free boundary problem}

\author{J. A. Gemmer} \address{Wake Forest University}
\author{G. Moon} \address{University of North Carolina at Chapel Hill}
\author{S. G. Raynor} \address{Wake Forest University} \thanks{The third author would like to thank the Simons Foundation for their support during the creation of this work.} \email{raynorsg@wfu.edu}
 \subjclass{{\bf
    35R35, 35B65, 35J20, 35J60, 35J05, 35J25}}
 \keywords{{\bf free
    boundary problems, elliptic regularity}}
\begin{abstract}
We explore regularity properties of solutions to a two-phase elliptic free boundary problem near a Neumann fixed boundary in two dimensions. Consider a function u, which is harmonic where it is not zero and satisfies a gradient jump condition weakly along the free boundary.  Our main result is that u is Lipschitz continuous up to the Neumann fixed boundary. We also present a numerical exploration of the way in which the free and fixed boundaries interact.
\end{abstract}

\maketitle

%

\section{Introduction}\label{intro}
In this paper we study the regularity of a two-phase free boundary with Neumann boundary conditions. A prototypical example of such a problem is the determination of steady state velocity fields for the laminar flow of two immiscible, incompressible fluids \cite{F}. It is a classical result that for each fluid there exists a corresponding velocity potential that satisfies Laplace's equation \cite{landau2013fluid}. However, to satisfy local stress balance, a gradient jump condition in the potential must be satisfied at the fluid-fluid interface \cite{landau2013fluid}. In Figure \ref{introfigure}(A) we plot on a square domain a generic example of velocity fields satisfying such properties. This problem also arises in a number of other applied areas including, but not limited to, fluid dynamics, electromagnetics and optimal shape design; see \cite{F,ACF2, flucher1997bernoulli,chen2015free} and the references therein.

\begin{figure}
\includegraphics[width=.8\textwidth]{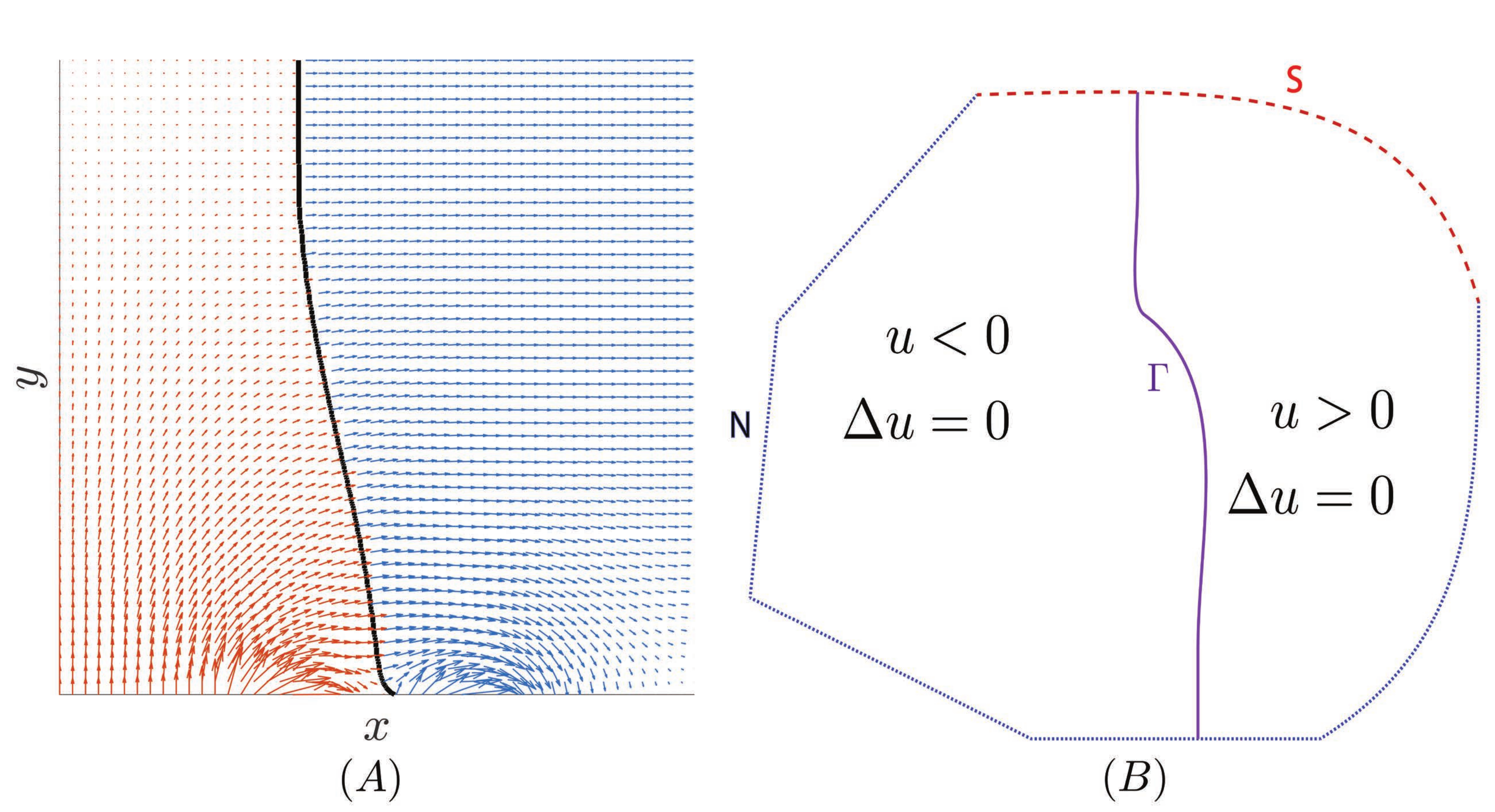}
\caption{(A) Velocity fields for two immiscible ideal fluids meeting at interface. Dirichlet boundary conditions were imposed on the bottom and right sides of the domain. Neumann boundary conditions were imposed on the left and top sides. (B) Schematic diagram of the free boundary problem. On $S$ and $N$ Dirichlet and Neumann Neumann boundary conditions are imposed respectively. The curve $\Gamma$ denotes the free boundary separating the phases $\{u>0\}$ and $\{u<0\}$.}
\label{introfigure}
\end{figure}

In words the problem is the following: find a function which is harmonic where it is nonzero and satisfies a possibly spatially inhomogeneous gradient jump condition across the boundary of its zero set $\Gamma$. Because the condition on $\Gamma$ is overdetermined, it is necessary not to predetermine the location of the transition--hence the name ``free boundary" problem and $\Gamma$ is known as the free boundary; see Figure \ref{introfigure}(B) for a schematic diagram of this problem. We are specifically interested in how the free boundary interacts with Neumann corner points on the boundary. For a smooth harmonic function satisfying Neumann boundary conditions it is clear that the level sets of the function, and in particular the free boundary, will intersect orthogonally with the Neumann boundary. However, depending on the opening angle of the corner, this local condition of orthogonality may contradict continuity of the free boundary away from the corner point. 

We will study this problem from a variational perspective. Namely, we will consider local minimizers of the functional $J:K\mapsto \mathbb{R}^+$ defined by
\begin{equation}\label{Eqn:EnergyFunc}
J[v]:=\int_{\W} \left ( |\nab v|^2 + Q^2(x) \lambda^2(v) \right ) d x,
\end{equation}
where for $\lambda_1>\lambda_2>0$ the function $\lambda :\mathbb{R}\mapsto \mathbb{R}^+$ is defined by
\begin{equation}\label{Eqn:Lambda}
\lambda(v)=\begin{cases} \lambda_1 & v > 0 \\ \lambda_2 & v \leq 0 \\ \end{cases},
\end{equation}
$\Omega\subset \mathbb{R}^2$ is a bounded, convex domain 
and $Q:\overline{\Omega}\mapsto \mathbb{R}^+$ is a measurable function satisfying for almost all $x\in \overline{\Omega}$:
\begin{equation}
0<m\leq Q(x) \leq M < \infty.
\end{equation}
The admissible set $K$ is defined by
\begin{equation}
K=\left\{v\in H^1(\Omega): \left. v\right|_{S}=u_0 \right\},
\end{equation}
where $u_0\in H^1(\overline{\Omega})$ and $S \subsetneq \p \W$. The existence of minimizers for this problem can be established using the direct method of the calculus of variations \cite{ACF} and to prevent triviality of minimizers we assume that the Dirichlet boundary data $u_0$ is inhomogeneous on $S$.

The functional $J$ models the energy or velocity potential for a large class of two phase problems. Specifically, if we let $u$ denote a minimizer of $J$ and use $P_+=\{u>0\}$ and $P_-=\{u<0\}$ to denote sets of positive and negative phases respectively, then $u$ enjoys the following properties \cite{ACF}:
\begin{enumerate}
\item $\D u = 0$ on $P_+$ and $P_-$,
\item $u=u_0$ on $S$,
\item $\displaystyle{\frac {\partial u}{\partial \nu}=0}$ in a weak sense along $N = \p\W \setminus S$,
\item On $\Gamma=\partial\{u>0\}$ the following jump condition is satisfied:
\begin{equation}\label{Eqn:GradJumpCondition}
|\nabla u^+|^2-|\nabla u^-|^2=(\lambda_1^2-\lambda_2^2)Q^2(x).
\end{equation}
\end{enumerate}
Formally, these properties arise as necessary conditions satisfied by critical points of the first variation of $J$. That is, the Neumann boundary conditions arise as the ``natural boundary conditions'' on $N$.  The gradient jump condition results from the fact that the distributional derivative of $\lambda^2(v)$ is a delta function of mass $\lambda_1^2-\lambda_2^2$.

The primary results we present in this paper are twofold. First, we prove that minimizers are Lipschitz continuous, a property that has also been shown to hold for Dirichlet boundary conditions \cite{ACF} and the one-phase Neumann problem \cite{R}. We restrict to $\R^2$ for technical reasons regarding the up-to-the-boundary monotonicity formula which we use to prove Lipschitz continuity.  The assumption that $\Omega$ is convex (but not necessarily smooth) is critical.  Indeed, even harmonic functions are not Lipschitz up to the boundary in non-convex, non-smooth domains. To see this, consider a harmonic function in a conic sector of $\R^2$ with opening angle $\theta$.  This function is proportional to $\|\vec{x}\|^\frac{\pi}{\theta},$ and when $\theta > \pi$ the resulting exponent is less than one, yielding a harmonic function that is not Lipschitz up to the vertex of the sector.  Therefore, to consider non-smooth domains we must impose the convexity condition. However, this convexity condition is truly necessary only near non-smooth points of the domain, so an exterior ball condition should be sufficient.   

Second, we numerically explore the interaction of the free boundary across Neumann boundaries containing corners, specifically parallelograms of various angles. By varying our Dirichlet boundary conditions on $S$ in such a way as to push the free boundary across a corner, we found that the free boundary does approach the fixed boundary orthogonally and will always do so.  However, as a perturbation in the fixed boundary conditions forces the free boundary to cross an acute angle, there is a jump in the position of the free boundary.  There is a forbidden region where the lack of room for an orthogonal intersection prevents the free boundary from intersecting the corner point. 

The numerical scheme we used is a simple finite difference approximation to the gradient flow applied to a relaxed version of $J$.  Here, $\lambda$ is replaced by a smooth transition layer.  This is a  technique used in \cite{caffarelli1995free} to model the temporal evolution of a propagating flame front. The benefits of using this approach are twofold. First, in contrast with shape optimization techniques \cite{haslinger2003shape, ito2008variational} and level set methods \cite{bouchon2005numerical, kuster2007fast}, this scheme is easy to implement for our specific problem. Second, in contrast with classical front tracking techniques \cite{flucher1997bernoulli, galvis2015iterative}, topological changes in the free boundary are handled by default, since the free boundary is simply the zero contour of a function. The price we pay for ease of implementation is in computational time. Namely, since gradients along the free boundary vary rapidly in space, a fine spatial discretization $\Delta x$ is required. However, it follows from the Courant-Friedrichs-Lewy (CFL) condition that the time discretization $\Delta t$ must satisfy $\Delta t < C\Delta x^2$ \cite{strikwerda2004finite}.

This paper is organized as follows: In Section 2 we review certain basic properties of minimizers for this problem.  In Section 3 we prove the main theorem on Lipschitz continuity of the minimizer.  Section 4 contains the explanation of the numerical scheme with a proof of convergence and Section 5 contains the numerical results and a discussion of them. Section 6 provides a conclusion and discussion of possible future directions that arise from our work. 
 
%

\section{Preliminaries}\label{prelimsection}
 

The free boundary of interest is
$$\G = \p\{ u > 0 \} \cap \W.$$
Considering a neighborhood $U$ of $\p\{ u < 0 \} \setminus \p\{ u > 0 \}$ we clearly must have $u \leq 0$ on $U$. However, $\la(0) = \la_{2}$ implies that $u$ must also be harmonic in $U$. Thus, we have $\p\{ u < 0 \} \setminus \p\{ u > 0 \} = \emptyset$, so $\p\{u > 0 \}$ is the only set on which the phase transition occurs.

\begin{rk}
Throughout this paper we assume that $\lambda_2 < \lambda_1$, however all of the analysis is analogous for the opposite case. The value of $\lambda$ at $v=0$ must be chosen so as to make the function lower semi-continuous in $v$.
\end{rk}

In \cite{ACF}, Alt, Caffarelli and Friedman proved a number of properties of minimizers of $J$ in the interior of the domain.
Due to the basic properties of solutions and the maximum principle we know that for almost every $x \in \W$, $$-\max\{u_{0}^{-}(y) : y \in \overline{\W} \} \leq u(x) \leq \max\{u_{0}^{+}(y) : y \in \overline{\W} \}.$$
The minimizer $u$ is H\"{o}lder continuous up to the boundary. The H\"{o}lder exponent is controlled by the Lipschitz constant of $\p \W$.  
This fact is proved in \cite{R}, for the one-phase problem, and the proof for the two-phase problem considered here is identical.


Additionally, we consider the sense in which Neumann boundary conditions hold for $u$.  Note that $\p_{\nu}u$ may not be
defined pointwise along $\p \W$, and in fact $\nu$ is not defined pointwise.  Therefore, we need an alternate, weak definition of our Neumann boundary conditions, which is as follows:
\begin{DEF}\label{def} We say that a harmonic function $v$ on a   Lipschitz domain $D$ {satisfies Neumann} {boundary conditions
    weakly} along an open set $N \subset \p D$ if    $$\int_D \nab v \cdot \nab \phi \ d x = 0$$ for every $\phi \in
   H^1(D)$, possibly with a boundary condition $\phi=0$ along $\p  D \setminus N$.
\end{DEF}
Note that this concept of Neumann boundary conditions is local, in that the behavior of $v$ away from a neighborhood around $N$ is irrelevant, and if it is proved to hold for a collection of open sets $N_{j} \subset \p D$ such that $\bigcup \limits_j N_{j} = N$, then it
holds on $N$.

We then have \cite{R}:
\begin{lemma} $\p_{\nu}u = 0$ weakly along $N \cap \{ u \neq 0\}$.
\end{lemma}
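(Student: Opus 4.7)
The plan is to use the local nature of the weak Neumann condition together with the variational characterization of $u$. Near any point $x_{0}\in N$ with $u(x_{0})\neq 0$, the H\"older continuity up to $\partial\Omega$ recorded in Section~\ref{prelimsection} ensures that $u$ has a definite sign on a small ball around $x_{0}$; small perturbations of $u$ supported in this ball therefore do not cross the free boundary, and the $Q^{2}\lambda^{2}(v)$ term in $J$ is unaffected. Minimality of $u$ then reduces to a first variation of the Dirichlet energy alone, which is exactly the weak Neumann identity.

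Concretely, fix $x_{0}\in N$ with $u(x_{0})>0$ (the case $u(x_{0})<0$ is identical). Choose $r,\delta>0$ so that $u\geq\delta$ on $\overline{\Omega}\cap B_{r}(x_{0})$ and $\partial\Omega\cap B_{r}(x_{0})\subset N$, and set $N_{0}=N\cap B_{r}(x_{0})$. Take a bounded $\phi\in H^{1}(\Omega)$ supported in $B_{r}(x_{0})$ with $\phi=0$ on $\partial\Omega\setminus N_{0}$. Since $S\subset\partial\Omega\setminus N\subset\partial\Omega\setminus N_{0}$, any such $\phi$ vanishes on $S$, so the competitor $v_{\epsilon}:=u+\epsilon\phi$ lies in the admissible class $K$ for every $\epsilon$. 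For $|\epsilon|<\delta/\|\phi\|_{L^{\infty}}$ one has $v_{\epsilon}>0$ on $\operatorname{supp}\phi$ and $v_{\epsilon}=u$ elsewhere, so $\lambda(v_{\epsilon})=\lambda(u)$ almost everywhere in $\Omega$, and the minimality inequality $J[v_{\epsilon}]\geq J[u]$ collapses to
\begin{equation*}
0 \leq 2\epsilon\int_{\Omega}\nabla u\cdot\nabla\phi\,dx + \epsilon^{2}\int_{\Omega}|\nabla\phi|^{2}\,dx.
\end{equation*}
Dividing by $\epsilon$ and sending $\epsilon\to 0^{+}$ and $\epsilon\to 0^{-}$ forces $\int_{\Omega}\nabla u\cdot\nabla\phi\,dx=0$.

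The main subtlety is to upgrade this identity from bounded test functions to general $\phi\in H^{1}(\Omega)$, since in $\mathbb{R}^{2}$ such functions need not be in $L^{\infty}$. I would handle this by truncation: $\phi_{k}:=\max\bigl(-k,\min(k,\phi)\bigr)$ is Lipschitz in $\phi$, shares the support and boundary trace of $\phi$, lies in $H^{1}(\Omega)$, and converges to $\phi$ in $H^{1}$ by dominated convergence applied to the gradients. Because $u\in H^{1}(\Omega)$, the functional $\phi\mapsto\int_{\Omega}\nabla u\cdot\nabla\phi\,dx$ is $H^{1}$-continuous, so the identity passes to the limit. Covering $N\cap\{u\neq 0\}$ by balls $B_{r}(x_{0})$ of the above form and invoking the locality remark following Definition~\ref{def} yields the weak Neumann condition on all of $N\cap\{u\neq 0\}$.
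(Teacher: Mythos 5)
Your argument is correct and is essentially the standard first-variation proof: since $u$ has a definite sign (hence $\lambda(v_\epsilon)=\lambda(u)$) near any point of $N\cap\{u\neq 0\}$, the admissible perturbations $u+\epsilon\phi$ only see the Dirichlet energy, and minimality yields $\int\nabla u\cdot\nabla\phi\,dx=0$; the truncation step and the locality of the weak Neumann condition are handled properly. This is the same approach as the proof the paper defers to in the cited reference for the one-phase case, so there is nothing to add.
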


Finally, note that since $u$ is harmonic where it is nonzero, the maximum principle will prohibit $u$ from being $0$ at
a point $x_0 \in N$ unless $B_r(x_0) \cap \W \cap \{u = 0\} \neq \emptyset$ for all $r > 0$.  Additionally, if $u =0$ in a neighborhood of $x_0$, then obviously $\p_{\nu}u$ is $0$ there.  So the only place in $N$ where the weak Neumann boundary conditions for $u$ might possibly fail is at the free boundary interface itself. In this context the weak maximum principle and the Harnack inequality for harmonic functions are verified in \cite{R}.



Finally, we finish the section with a result from \cite{R} about the regularity of harmonic functions on convex domains.

\begin{lemma} \label{hdharmlemma} Let $\W \subset \R^2$ be a domain such that $\p \W$ is the graph of a convex function $f$.  Suppose $0 \in \W$ and let $r = \mathrm{dist} (0, \p \W)$.  Let $R > 2 r$ and let $D = B_R(0) \cap \W$.  Let $N = B_R \cap \p \W$ and let $S = \overline{\p B_R \cap \W}$.  Let $u$ be a nonnegative harmonic function on D bounded by a constant $A$, with $\p_{\nu}u = 0 $ along $N$.  Then there is an absolute constant $C >0$  such that $|\nab u| \leq C \frac{A}{R}$ on $B_{\frac{R}{2}}$.
\end{lemma}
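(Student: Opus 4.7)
The plan is to reduce the boundary gradient bound to an interior gradient estimate by flattening and reflecting. After the normalization $u(\cdot)\mapsto u(R\,\cdot)/A$, the claim becomes $|\nab u|\le C$ on $B_{1/2}\cap\W$. For points $x\in B_{1/2}\cap\W$ whose distance from $N$ exceeds $1/8$, the classical interior gradient estimate for bounded harmonic functions handles the bound immediately, so only points close to $N$ require additional work.

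For $x$ near a boundary point $x_0\in N\cap\overline{B_{1/2}}$, I would straighten $N$ locally via the bi-Lipschitz change of variables $\Phi(x_1,x_2)=(x_1,\,x_2-f(x_1))$, which maps the relevant piece of $N$ into a segment of $\{y_2=0\}$. In the new coordinates the pullback $\tilde u = u\circ \Phi^{-1}$ satisfies a uniformly elliptic divergence-form equation $\p_i(a^{ij}(y_1)\p_j\tilde u)=0$ whose coefficients depend only on $f'(y_1)$, together with the conormal condition $a^{2j}\p_j\tilde u=0$ on $\{y_2=0\}$ equivalent to the weak Neumann condition of Definition~\ref{def}. Extending $\tilde u$ by even reflection in $y_2$, combined with a sign flip of the cross coefficients $a^{12}=a^{21}$ on the reflected side, produces an $H^1$ solution of a uniformly elliptic equation on a full two-sided ball around $\Phi(x_0)$, reducing the desired boundary gradient bound to an interior gradient estimate for the reflected equation.

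The main obstacle is the regularity of the reflected coefficients: since a convex $f$ is only Lipschitz, the $a^{ij}$ are merely bounded and measurable (and after reflection even the cross coefficients jump across $\{y_2=0\}$), so standard interior $C^{0,1}$ estimates do not apply directly. I would resolve this by approximation: mollify $f$ to produce smooth convex approximants $f_\epsilon\to f$, obtain the corresponding harmonic functions $u_\epsilon$ on the smoothed convex domains $\W_\epsilon$, and derive a uniform-in-$\epsilon$ bound $|\nab u_\epsilon|\le C$ via a maximum-principle argument on the subharmonic quantity $|\nab u_\epsilon|^2$. The key geometric input is that on a smooth convex Neumann boundary one has $\p_\nu |\nab u_\epsilon|^2 = -2\,\mathrm{II}(\nab u_\epsilon,\nab u_\epsilon)\le 0$, so Hopf prevents the maximum from being attained on $N_\epsilon$ and it must come from the ``Dirichlet'' portion $S_\epsilon\subset \p B_1$, where it is controlled by the interior estimate applied on balls of radius $\sim 1/2$. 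A compactness argument built on uniform $H^1$ bounds and the De~Giorgi--Nash Hölder estimate for the reflected equation then transfers the bound to $u$, yielding the claimed absolute constant. Convexity is essential here, mirroring the introduction's discussion: re-entrant corners permit harmonic functions that fail Lipschitz regularity at the boundary, so any $R$-independent constant must genuinely exploit the convexity of $f$.
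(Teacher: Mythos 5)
First, a point of reference: the paper contains no proof of Lemma~\ref{hdharmlemma} at all --- it is imported verbatim from \cite{R} --- so your argument can only be judged on its own merits and against that external source, not against an in-paper proof. On those merits, your overall strategy (normalize, flatten and reflect to cope with the merely Lipschitz boundary, mollify the convex graph, and extract the quantitative bound from the convexity identity $\p_\nu|\nab u_\epsilon|^2=-2\,\mathrm{II}(\nab u_\epsilon,\nab u_\epsilon)\le 0$ together with subharmonicity of $|\nab u_\epsilon|^2$) is reasonable, and that identity is indeed the right mechanism by which convexity produces an $R$-independent, absolute constant. But the final step of your maximum-principle argument has a genuine gap. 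Hopf's lemma only \emph{relocates} the maximum of the subharmonic quantity $|\nab u_\epsilon|^2$ from the interior of $N_\epsilon$ to $S_\epsilon=\overline{\p B_1\cap \W_\epsilon}$; it does not bound it there. Every point of $S_\epsilon$ lies on $\p B_1$, and such points can be arbitrarily close to the corner $\p B_1\cap\p\W_\epsilon$, so they admit no interior ball of radius comparable to $1/2$ --- in fact no interior ball of any fixed radius --- and the classical interior gradient estimate gives nothing at them. The Dirichlet data on $S_\epsilon$ controls $u_\epsilon$, not $\nab u_\epsilon$. As written, ``the maximum on $S_\epsilon$ is controlled by the interior estimate'' is false, and the argument does not close.

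The standard repair is a Bernstein-type auxiliary function rather than the raw quantity $|\nab u_\epsilon|^2$: take a radial cutoff $\eta=\eta(|x|)$ with $\eta\equiv 1$ on $B_{1/2}$, $\eta\equiv 0$ near $\p B_1$ and $\eta'\le 0$, and set $w=\eta^2|\nab u_\epsilon|^2+\mu u_\epsilon^2$ for a large absolute constant $\mu$. Then $\Delta w\ge 0$ (the term $\mu u_\epsilon^2$ absorbs the commutator errors from the cutoff via $\Delta u_\epsilon^2=2|\nab u_\epsilon|^2$); on $N_\epsilon$ one has $\p_\nu w\le 0$ because $\p_\nu u_\epsilon=0$, $\mathrm{II}\ge 0$, and $\p_\nu(\eta^2)\le 0$ --- the last inequality uses $x\cdot\nu(x)\ge 0$ on the boundary of a convex domain containing the origin, a second and easy-to-miss use of convexity; and on $S_\epsilon$ and at the corner points one simply has $w=\mu u_\epsilon^2\le\mu A^2$ because $\eta$ vanishes there. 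This yields $|\nab u_\epsilon|\le CA$ on $B_{1/2}$ with $C$ absolute, after which your compactness step can transfer the bound to $u$. Two smaller items also deserve a sentence each in a complete write-up: the ellipticity constants of your flattened, reflected operator depend on $\mathrm{Lip}(f)$, so they may only be used qualitatively (for compactness and identification of the limit), never in the quantitative bound; and the identification of $\lim u_\epsilon$ with $u$ requires stability of the mixed Dirichlet--Neumann problem under the domain perturbation (e.g.\ choose $f_\epsilon\ge f$ so that $\W_\epsilon\subset\W$ and invoke uniqueness in $H^1$).
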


%
\section{Main Theorem}

In this section, we present our main result: a gradient bound for minimizers of \eqref{Eqn:EnergyFunc} up to the Neumann boundary on a convex domain in $\R^2$.  To prove this result, we will use a monotonicity lemma the proof of which we adapt from \cite{ACF}. 


\begin{lemma}\label{monotonicity_formula}
Let $r_{0} > 0$, $x \in N$ with $d(x,\Gamma) < r_{0}$ and suppose $d(x,S) \geq r_{0}$. Set 
$$\phi (r) = \frac{1}{r^{4}} \int_{B_{r} \cap \Omega} |\nabla u^{+}|^{2} \ d x \cdot \int_{B_{r} \cap \Omega} |\nabla u^{-}|^{2} \ d x.$$
If $u \in C(B_{r} \cap \Omega) \cap H^{1}(B_{r} \cap \Omega)$ satisfies $\p_{\nu}u = 0$ on $N$ and $\Delta u = 0$ in $B_{r} \cap \Omega \setminus \{u=0\}$, then $\phi '(r) \geq 0$.
\end{lemma}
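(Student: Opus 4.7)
The plan is to adapt the Alt--Caffarelli--Friedman monotonicity strategy to the Neumann fixed-boundary setting in $\R^2$. Set $I_i(r)=\int_{B_r\cap\Omega}|\nabla u^{\pm}|^2\,dx$ (with $+$ for $i=1$ and $-$ for $i=2$), so that $\phi(r)=r^{-4}I_1(r)I_2(r)$ and
$$\frac{\phi'(r)}{\phi(r)}=-\frac{4}{r}+\frac{I_1'(r)}{I_1(r)}+\frac{I_2'(r)}{I_2(r)}.$$
Assuming both $I_i>0$ (otherwise $\phi\equiv 0$ and there is nothing to prove), it then suffices to exhibit exponents $\gamma_1,\gamma_2$ with $I_i'(r)/I_i(r)\ge 2\gamma_i/r$ and $\gamma_1+\gamma_2\ge 2$.

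First I would apply Green's identity on $\{\pm u>0\}\cap B_r\cap\Omega$. Since $u^{\pm}$ is harmonic on its support, vanishes on the free boundary $\Gamma$, and satisfies the weak Neumann condition $\partial_\nu u=0$ on $N$, the only surviving boundary contribution is on $\partial B_r$, giving
$$I_i(r)=\int_{A_i(r)}u^{\pm}\,\partial_r u^{\pm}\,ds,\qquad A_i(r)=\partial B_r\cap\Omega\cap\{\pm u>0\},$$
while differentiating in $r$ yields $I_i'(r)=\int_{A_i(r)}[(\partial_r u^{\pm})^2+(\partial_s u^{\pm})^2]\,ds$ with $\partial_s$ the arc-length derivative along $\partial B_r$. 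The pointwise AM--GM bound $(\partial_r u^{\pm})^2+(\gamma_i/r)^2(u^{\pm})^2\ge (2\gamma_i/r)\,u^{\pm}\partial_r u^{\pm}$ combined with a one-dimensional Poincar\'e inequality $\int_{A_i}(\partial_s u^{\pm})^2\,ds\ge (\gamma_i^2/r^2)\int_{A_i}(u^{\pm})^2\,ds$ then produces $I_i'(r)\ge (2\gamma_i/r)\,I_i(r)$.

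Here $\gamma_i^2$ is the first eigenvalue of $-d^2/d\theta^2$ on the angular arc $\omega_i$ supporting $A_i$, with Dirichlet conditions at endpoints where $u$ vanishes (free boundary meeting $\partial B_r$) and the condition inherited from $\partial_\nu u=0$ at endpoints lying on $N$. The final ingredient is a Friedland--Hayman-style estimate: because $x\in N$ and $\Omega$ is convex, $\Omega$ lies in the supporting half-plane at $x$, so $\partial B_r\cap\Omega$ has angular measure at most $\pi$, and consequently $\alpha_1+\alpha_2\le\pi$ for the angular lengths $\alpha_i=|\omega_i|$. In the pure Dirichlet case one has $\gamma_i=\pi/\alpha_i$ and in the mixed Dirichlet--Neumann case $\gamma_i=\pi/(2\alpha_i)$; in either configuration the AM--HM inequality delivers $\gamma_1+\gamma_2\ge 2$, completing the argument.

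The hard part will be justifying the eigenvalue bound at endpoints of $A_i(r)$ lying on $N$: the Neumann condition $\partial_\nu u=0$ does not directly translate into a clean Neumann condition for the one-variable trace $g=u^{\pm}|_{\partial B_r}$, because the outward normal $\nu_N$ to $\Omega$ is not tangent to $\partial B_r$ except in the degenerate half-plane geometry. To justify the Rayleigh quotient inequality I would exploit convexity either by reflecting across the supporting line at $x$ (exact when $N$ is straight near $x$), by approximating $\Omega$ by polygonal convex domains where such reflection is clean and passing to the limit, or by a direct perturbative estimate controlling the angle between $\nu_N$ and the tangential direction $\hat{\theta}$ for small $r$. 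This is the principal technical step separating the present result from the interior ACF monotonicity of \cite{ACF}.
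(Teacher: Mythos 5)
Your argument is essentially the paper's proof: the same decomposition $I_i(r)=\int_{S_r\cap\Omega}u^{\pm}\p_r u^{\pm}\,d\s$ via Green's identity, the same Cauchy--Schwarz/AM--GM step, the same mixed Dirichlet--Neumann eigenvalue $\gamma_i=\pi/(2\alpha_i)$ on the angular arc, and the same use of convexity to get $\alpha_1+\alpha_2\le\pi$ and hence $\gamma_1+\gamma_2\ge 2$. The technical worry in your last paragraph is lighter than you suggest, and the paper does not address it: in the Rayleigh-quotient comparison the Neumann condition at the endpoint on $N$ is a \emph{natural} boundary condition, so the trace of $u^{\pm}$ need only vanish at the free-boundary endpoint to be an admissible trial function, and no reflection or approximation is required.
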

\begin{proof}

\begin{figure}[h]
\centering
\includegraphics[scale=0.5]{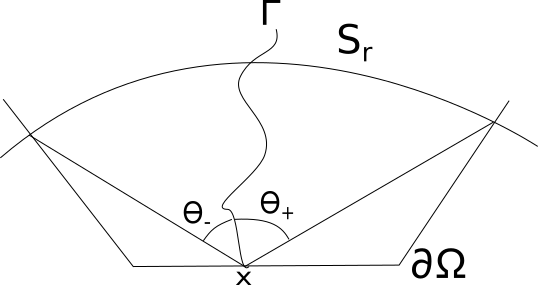}
\caption{Domain}
\label{fig:domain}
\end{figure}

Let $x \in N$, $B_{r} = B_{r}(x)$, $S_{r} = S_{r}(x)$ and $\theta_{+},\theta_{-}$ be defined as in Figure \ref{fig:domain}. Throughout this proof we will let $(r,\theta)$ denote polar coordinates centered at $x$. It follows from convexity that $\theta_{+} + \theta_{-} \leq \pi$ (see Figure \ref{fig:domain}). 

The solution to the eigenvalue problem
\begin{equation*}
\frac{-1}{r^{2}} \frac{d^{2}f_{\pm}}{d\theta^{2}} = \lambda_{\pm}f_{\pm}, ~f_{\pm}(\theta_{\pm}) = f_{\pm}'(0)=0
\end{equation*}
is given by
$$ f_{\pm}=C_{\pm}\cos\left(\sqrt{\lambda_{\pm}}r\theta\right)$$
with 
$$ 
\sqrt{\lambda_{\pm}}r\theta_{\pm}=\frac{\pi}{2}.
$$
By scaling we may assume, without loss of generality, that $r=1$.  Consequently, since $\theta_{+}^{-1} + \theta_{-}^{-1} \geq \theta_{+}^{-1} + (\pi - \theta_{+})^{-1}\geq \frac{4}{\pi}$, it follows that
\begin{equation}\label{Eqn:LambdaBound}
\sqrt{\lambda_{+}} + \sqrt{\lambda_{-}} = \frac{\pi}{2\theta_{+}} + \frac{\pi}{2\theta_{-}} \geq 2.
\end{equation}


Now let $\W^{+}$ and $\W^{-}$ be the support of $u^{+}$ and $u^{-}$ respectively on $S_{1} \cap \Omega$. Applying the Rayleigh quotient gives $$\lambda_{\pm} = \inf_{v \in H_{0}^{1}(\W^{\pm})} \frac{\int_{\W^{\pm}} (\p_{\theta}v)^{2} \ d \s}{\int_{\W^{\pm}} v^{2} \ d \s} \leq \frac{\int_{S_{1} \cap \Omega} (\p_{\theta}u^{\pm})^{2} \ d \s}{\int_{S_{1} \cap \Omega} (u^{\pm})^{2} \ d \s},$$ where $d\sigma$ is the line element.
Consequently, 
$$\sqrt{\int_{S_{1} \cap \Omega} (\p_{\theta}u^{\pm})^{2} \ d \s} \geq \sqrt{\lambda_{\pm}}\sqrt{\int_{S_{1} \cap \Omega} (u^{\pm})^{2} \ d \s}$$
and therefore it follows that
\begin{align}
\int_{S_{1} \cap \Omega} |\nabla u^{\pm}|^{2} \ d \s &= \int_{S_{1} \cap \Omega} (\p_{r}u^{\pm})^{2} + (\p_{\theta}u^{\pm})^{2} \ d \s \nonumber \\
&\geq 2\sqrt{\int_{S_{1} \cap \Omega} (\p_{r}u^{\pm})^{2} \ d \s \cdot \int_{S_{1} \cap \Omega} (\p_{\theta}u^{\pm})^{2} \ d \s} \nonumber \\
&\geq 2\sqrt{\lambda_{\pm}}\sqrt{\int_{S_{1} \cap \Omega} (\p_{r}u^{\pm})^{2} \ d \s \cdot \int_{S_{1} \cap \Omega} (u^{\pm})^{2} \ d \s} \nonumber \\
&\geq 2\sqrt{\lambda_{\pm}}\int_{S_{1} \cap \Omega} |u^{\pm}\p_{r}u^{\pm}| \ d \s. \label{Eqn:RayleighBound}
\end{align}

Finally, differentiating it follows that
\begin{align*}
\begin{split}
\phi '(r) = {}&\frac{-4}{r^{5}} \left[ \int_{B_{r} \cap \Omega} |\nabla u^{+}|^{2} \ d x \cdot \int_{B_{r} \cap \Omega} |\nabla u^{-}|^{2} \ d x \right] + \frac{1}{r^{4}} \left[ \int_{S_{r} \cap \Omega} |\nabla u^{+}|^{2} \ d \s \cdot \int_{B_{r} \cap \Omega} |\nabla u^{-}|^{2} \ d x \right] \\ 
{}& +\frac{1}{r^{4}} \left[ \int_{B_{r} \cap \Omega} |\nabla u^{+}|^{2} \ d x \cdot \int_{S_{r} \cap \Omega} |\nabla u^{-}|^{2} \ d \s \right].
\end{split}
\end{align*}
But, since in $\Omega$ either $u^{\pm}=0$ or $\Delta u^{\pm}=0$, it follows from integration by parts and the Neumann boundary conditions that
\begin{align*}
\int_{B_{r} \cap \Omega} |\nabla u^{\pm}|^{2} \ d x = \int_{S_{r} \cap \Omega} u^{\pm}\p_{r}u^{\pm} \ d \s,
\end{align*}
where we have used the fact that $\p_{\nu}u^{\pm} = \p_{r}u^{\pm}$ on $S_{r}$. So, using the bounds (\ref{Eqn:Lambda}) and (\ref{Eqn:RayleighBound}) it follows that
\begin{align*}
\begin{split}
\phi '(1) = {}&-4  \int_{S_{1} \cap \Omega} u^{+}\p_{r}u^{+} \ d \s \cdot \int_{S_{1} \cap \Omega} u^{-}\p_{r}u^{-} \ d \s  +  \int_{S_{1} \cap \Omega} |\nabla u^{+}|^{2} \ d \s \cdot \int_{S_{1} \cap \Omega} u^{-}\p_{r}u^{-} \ d \s  \\ 
{}&  + \int_{S_{1} \cap \Omega} u^{+}\p_{r}u^{+} \ d \s \cdot \int_{S_{1} \cap \Omega} |\nabla u^{-}|^{2} \ d \s \\
\geq {}&-4\int_{S_{1} \cap \Omega} |u^{+}\p_{r}u^{+}| \ d \s \cdot \int_{S_{1} \cap \Omega} |u^{-}\p_{r}u^{-}| \ d \s + 2\sqrt{\lambda_{+}}\int_{S_{1} \cap \Omega} |u^{+}\p_{r}u^{+}| \ d \s \cdot \int_{S_{1} \cap \Omega} |u^{-}\p_{r}u^{-}| \ d \s \\
{}& +2\sqrt{\lambda_{-}}\int_{S_{1} \cap \Omega} |u^{+}\p_{r}u^{+}| \ d \s \cdot \int_{S_{1} \cap \Omega} |u^{-}\p_{r}u^{-}| \ d \s\\
={}& [-4 + 2(\sqrt{\lambda_{+}} + \sqrt{\lambda_{-}})]\int_{S_{1} \cap \Omega} u^{+}\p_{r}u^{+} \ d \s \cdot \int_{S_{1} \cap \Omega} u^{-}\p_{r}u^{-} \ d \s \geq 0.
\end{split}
\end{align*}
Therefore, we can conclude, after rescaling, that
$$\phi '(r) \geq 0.$$
\end{proof}





\begin{lemma}\label{harmonic_measure_bound}
	We have $\Delta u(B_{\frac{r}{2}} \cap \Omega) \leq Cr$.
\end{lemma}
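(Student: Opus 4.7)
The plan is a harmonic-replacement-plus-cutoff argument, standard in free boundary theory. I would first derive an $H^1$-energy bound on the difference $w = u - \tilde u$ between $u$ and its harmonic replacement on $B_r(x) \cap \Omega$, and then convert this bound into the claimed measure estimate for $\Delta u$ by testing against a smooth cutoff supported in $B_r(x)$.

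First I would construct the competitor. Let $\tilde u$ be the harmonic function on $B_r(x) \cap \Omega$ with $\tilde u = u$ on the inner arc $\partial B_r(x) \cap \Omega$ and $\partial_\nu \tilde u = 0$ weakly on $N \cap B_r(x)$. Setting $w = u - \tilde u$, the function $w$ vanishes on $\partial B_r(x) \cap \Omega$ and inherits the weak Neumann condition $\partial_\nu w = 0$ on $N \cap B_r(x)$. Since $d(x,S) \geq r_0$, for $r \leq r_0$ the function $\bar u$ defined to equal $\tilde u$ on $B_r(x) \cap \Omega$ and $u$ outside is an admissible competitor. Integration by parts using $\Delta \tilde u = 0$, $w = 0$ on $\partial B_r \cap \Omega$, and $\partial_\nu \tilde u = 0$ on $N \cap B_r$ gives the orthogonality $\int \nabla \tilde u \cdot \nabla w \, dx = 0$, so
\[
\int_{B_r(x) \cap \Omega} |\nabla u|^2 \, dx - \int_{B_r(x) \cap \Omega} |\nabla \tilde u|^2 \, dx = \int_{B_r(x) \cap \Omega} |\nabla w|^2 \, dx.
\]
The minimality $J[u] \leq J[\bar u]$, combined with $|\lambda^2(\tilde u) - \lambda^2(u)| \leq \lambda_1^2 - \lambda_2^2$, $Q \leq M$, and $|B_r(x) \cap \Omega| \leq \pi r^2$, then produces the key energy estimate
\[
\int_{B_r(x) \cap \Omega} |\nabla w|^2 \, dx \leq M^2(\lambda_1^2 - \lambda_2^2) \, |B_r(x) \cap \Omega| \leq C r^2.
\]

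Next I would run the cutoff argument. Fix $\eta \in C^1$ with $0 \leq \eta \leq 1$, $\eta \equiv 1$ on $B_{r/2}(x)$, $\eta \equiv 0$ near $\partial B_r(x)$, and $|\nabla \eta| \leq C/r$, so that $\|\nabla \eta\|_{L^2(B_r(x) \cap \Omega)} \leq C$. Using the weak Neumann condition for $u$ on $N$ and the fact that $\eta = 0$ on $\partial B_r$,
\[
\int_{B_r(x) \cap \Omega} \eta \, d(\Delta u) = -\int_{B_r(x) \cap \Omega} \nabla \eta \cdot \nabla u \, dx = -\int_{B_r(x) \cap \Omega} \nabla \eta \cdot \nabla w \, dx,
\]
where the last equality holds because $\int \nabla \eta \cdot \nabla \tilde u \, dx = 0$ by the same integration by parts. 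Cauchy-Schwarz combined with the energy bound yields
\[
\left| \int \eta \, d(\Delta u) \right| \leq \|\nabla \eta\|_{L^2} \, \|\nabla w\|_{L^2} \leq C \cdot Cr = Cr.
\]
Since $\eta \equiv 1$ on $B_{r/2}(x)$, and $\Delta u = \Delta u^+ - \Delta u^-$ is the difference of two non-negative measures supported on $\Gamma$, applying the same argument to $\Delta u^+$ and $\Delta u^-$ separately yields the stated bound $\Delta u(B_{r/2}(x) \cap \Omega) \leq C r$.

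The main obstacle is making the harmonic replacement $\tilde u$ rigorous for the mixed Dirichlet--Neumann problem on the convex, possibly non-smooth, domain $B_r(x) \cap \Omega$, and verifying that the weak Neumann condition propagates from $u$ to $\tilde u$ and to $w$ in a form strong enough to justify the integration-by-parts identities without extraneous boundary contributions on $N$. A secondary subtlety is the measure-theoretic meaning of $\Delta u(B_{r/2}(x) \cap \Omega)$: translating the test-function bound against $\eta$ into a bound on the measure of $B_{r/2}(x)$ is done by working with the non-negative Jordan components $\Delta u^\pm$ individually, each of which is controlled by the same test-function argument.
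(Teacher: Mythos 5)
Your proposal is correct, and while it opens exactly as the paper does --- mixed Dirichlet--Neumann harmonic replacement on $B_r\cap\Omega$, orthogonality of $\nabla\tilde u$ and $\nabla w$, and minimality giving $\int|\nabla w|^2\le C r^2$ --- it finishes by a genuinely different and more elementary route. The paper converts the energy excess into $\int_{B_r\cap\Omega} v\,d(\Delta u)$, flattens the domain with a bilipschitz map, reflects evenly, and invokes the Littman--Stampacchia--Weinberger Green's function comparison to get the quadratic inequality $c\,(\Delta u(B_{r/2}\cap\Omega))^2\le Cr^2$. You instead test $\Delta u$ against a cutoff $\eta$ and use Cauchy--Schwarz, $\bigl|\int\nabla\eta\cdot\nabla w\bigr|\le\|\nabla\eta\|_{L^2}\|\nabla w\|_{L^2}\le C\cdot Cr$, which is linear in the measure, avoids the Green's function machinery entirely, and scales correctly in every dimension (in $\R^2$, $\|\nabla\eta\|_{L^2}\le C$ is scale-invariant, which is what makes the bookkeeping clean). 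Both arguments share the same two delicate points, which you correctly flag: the well-posedness of the mixed replacement with weak Neumann conditions, and the meaning of $\Delta u$ as a measure up to $N$ --- the paper resolves the latter by the even reflection, while your pairing $\int\eta\,d(\Delta u)=-\int\nabla\eta\cdot\nabla u$ amounts to taking the ``Neumann Laplacian'' measure as the definition, which is the same object. One small correction: your closing appeal to the Jordan components $\Delta u^{\pm}$ does not quite work as stated, since the energy bound controls $w=u-\tilde u$ and not $u^{+}$ or $u^{-}$ separately; the right observation is that $\lambda_1>\lambda_2$ forces $|\nabla u^{+}|\ge|\nabla u^{-}|$ on $\Gamma$, so $u$ is subharmonic and $\Delta u\ge 0$ as a measure (a fact the paper also uses, in the following lemma), whence $\Delta u(B_{r/2}\cap\Omega)\le\int\eta\,d(\Delta u)\le Cr$ directly.
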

\begin{proof}
Let $v$ be the solution of
\begin{align*}
\Delta v &= 0 \ \text{in} \ B_{r}\\
v|_{S_{r}} &= u.
\end{align*}
Since $u$ is a minimizer:
\begin{align*}
\int_{B_{r} \cap \Omega} |\nabla u|^{2} \ d x - \int_{B_{r} \cap \Omega} |\nabla v|^{2} \ d x
&\leq |\lambda_{1}^{2} - \lambda_{2}^{2}| \int_{B_{r} \cap \Omega} Q^{2}(x) \ d x\\
&\leq |\lambda_{1}^{2} - \lambda_{2}^{2}| \|Q^{2}\|_{L^{\infty}(B_{r} \cap \Omega)} |B_{r} \cap \Omega|\\
&\leq Cr^{2}.
\end{align*}
However,
\begin{equation*}
\int_{B_{r} \cap \Omega} |\nabla u|^{2} - |\nabla v|^{2} \ d x = \int_{B_{r} \cap \Omega} \nabla(u-v) \cdot \nabla(u-v) \ d x + \int_{B_{r} \cap \Omega} 2\nabla(u-v) \cdot \nabla v \ d x.
\end{equation*}
Notice that
\begin{align*}
\int_{B_{r} \cap \Omega} 2\nabla(u-v) \cdot \nabla v \ d x &= -\int_{B_{r} \cap \Omega} 2(u-v)\Delta v \ d x + \int_{S_{r} \cap \Omega} 2(u-v)\p_{\nu}v \ d x + \int_{B_{r} \cap \p\Omega} 2(u-v)\p_{\nu} v \ d x\\
&= 0,
\end{align*}
as $\Delta v = 0$ in $B_{r}$, $u=v$ on $S_{r}$ and $\p_{\nu}v = 0$ on $\p\Omega$. So,
\begin{align*}
\int_{B_{r} \cap \Omega} |\nabla u|^{2} - |\nabla v|^{2} \ d x = {}& \int_{B_{r} \cap \Omega} \nabla(u-v) \cdot \nabla(u-v) \ d x\\
= {}& \int_{B_{r} \cap \Omega} |\nabla u|^{2} - \nabla u \cdot \nabla v - \nabla v \cdot \nabla u + |\nabla v|^{2} \ d x\\
\begin{split}
= {}&\int_{B_{r} \cap \Omega} |\nabla u|^{2} - \nabla v \cdot \nabla u \ d x + \int_{B_{r} \cap \Omega} u \Delta v - v \Delta v \ d x + \int_{S_{r} \cap \Omega} v\p_{\nu}v - u\p_{\nu}v \ d x +\\ {}& \int_{B_{r} \cap \p\Omega} v\p_{\nu}v - u\p_{\nu}v \ d x
\end{split}\\
= {}& \int_{B_{r} \cap \Omega} |\nabla u|^{2} - \nabla v \cdot \nabla u \ d x\\
= {}&\int_{B_{r} \cap \Omega} \nabla(u-v) \cdot \nabla u \ d x\\
= {}&\int_{B_{r} \cap \Omega} (v-u)\Delta u \ d x + \int_{S_{r} \cap \Omega} (v-u)\p_{\nu}u \ d \s + \int_{B_{r} \cap \p\Omega} (v-u)\p_{\nu}u \ d x\\
= {}&\int_{B_{r} \cap \Omega} (v-u)\Delta u \ d x\\
= {}&\int_{B_{r} \cap \Omega} v\Delta u \ d x,
\end{align*}
as $\Delta u$ is a measure supported on $\{u=0\}$. Therefore,
$$\int_{B_{\frac{r}{2}} \cap \Omega \cap \{u=0\}} v\Delta u \ \id x \leq Cr^{2}.$$

Since $\Omega$ is a Lipschitz domain, there is a bilipschitz map
$$F: B_{r} \cap \Omega \to B_{r}^{+}.$$ Define the operator $L$ by $Lv=\p_i(a^{ij}\p_j v),$ where $a^{ij}(x)=|\mathrm{det}(\nab F^{-1})| (\nab F)^T\nab F,$ and let $\tilde{u}=u\circ F^{-1}$, $\tilde{v}=v\circ F^{-1}$. We will show that $\tilde{v}$ satisfies $L\tilde{v} = 0 $ in $B_r^+$ and $\tilde{v}=\tilde{u}$ on $\p B_r^+\cap \{x_n > 0\}$.  We then use an even reflection to find a solution to $L\tilde{v}=0$ in $B_r$.  Note the coefficients of $L$ are necessarily bounded and measurable.  Therefore, there is a Green's function $\tilde{G}$ associated to this operator, and, as proved in \cite{LSW}, if $G$ is the standard Green's function on $B_r$, then there are positive constants $c$ and $C$ so that $cG \leq \tilde{G}\leq CG$ on $B_r$.  Additionally, define the function $H$ on $B_r\cap \Omega$ by $H(x,y)=\tilde{G}(F(x),F(y))$.  

Note next we have, as in \cite{LSW}, that
	$$\tilde{u}(x^{0}) = \tilde{v}(x^{0}) - \int_{B_{r}} \tilde{G}_{x^{0}}L\tilde{u}(y) \ d y.$$
Letting $x^{0} \in \{\tilde{u}=0\}$ it follows that
	$$\tilde{v}(x^{0}) = \int_{B_{r}} \tilde{G}_{x^{0}}(y)L\tilde{u}(y) \ d y.$$	
Let $V = F(B_{\frac{r}{2}} \cap \Omega)$. Then, $V \subset B_{r}$ and $F$ being bilipschitz together imply that
	$$Cr^{2} \geq \int_{B_{\frac{r}{2}} \cap \Omega} v\Delta u \ d x = \int_{V} \tilde{v}L\tilde{u} \ d x = c\int_{V} \left( \int_{B_{r}} \tilde{G}_{x^{0}}(y)L\tilde{u}(y) \ d y \right)L\tilde{u}(x) \ d x.$$	
Notice that $\tilde{G}_{x^{0}}(y) \geq c > 0$ for $x,y \in V$ \cite{LSW}. It follows that 
	\begin{align*}
	Cr^{2} &\geq \int_{V}\left( \int_{B_{r}} \tilde{G}_{x^{0}}(y)L\tilde{u}(y) \ d y \right)L\tilde{u}(x) \ d x\\
	&\geq c\int_{V}\left( \int_{B_{r}} L\tilde{u}(y) \ d y \right)L\tilde{u}(x) \ d x\\
	&= c\int_{V} (L\tilde{u}(B_{r}))L\tilde{u}(x) \ d x\\
	&= c(L\tilde{u}(B_{r}))\int_{V} L\tilde{u}(x) \ d x\\
	&= cL\tilde{u}(B_{r})L\tilde{u}(V)\\
	&\geq c(L\tilde{u}(V))^{2}.
	\end{align*}
	Therefore, $L\tilde{u}(V) \leq Cr$. Since $F$ is bilipschitz, it follows that $\Delta u(B_{\frac{r}{2}} \cap \Omega) \leq Cr$.
\end{proof}

\begin{lemma}\label{upper_estimate_on_averages}
	Let $\max\{\lambda_{1}^{2},\lambda_{2}^{2}\} = \ell_{1}$. If $B_{r}$ has center in $\{u=0\}$, then there is a positive constant $C = C(q_{2}, \ell_{1})$ such that
	$$\frac{1}{r} \left| \fint_{S_{r} \cap \Omega} u \ d \s \right| \leq C.$$
\end{lemma}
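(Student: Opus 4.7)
The plan is to combine a Green's identity with the harmonic measure bound of Lemma \ref{harmonic_measure_bound}. Let $x_{0}$ denote the center of $B_{r}$, so by assumption $u(x_{0})=0$.

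First I would introduce the mixed Neumann--Dirichlet Green's function $\phi$ on $B_{r}\cap\W$ with pole at $x_{0}$: $\D\phi = -\delta_{x_{0}}$ in $B_{r}\cap\W$, $\phi=0$ on $S_{r}\cap\W$, and $\p_{\nu}\phi=0$ on $N\cap B_{r}$. Applying Green's second identity to $u$ and $\phi$, and using $u(x_{0})=0$, $\p_{\nu}u=0$ on $N$, and $\phi=0$ on $S_{r}$, produces the identity
\begin{equation*}
\int_{B_{r}\cap\W} \phi \, d\D u \;=\; \int_{S_{r}\cap\W} u\,\p_{\nu}\phi\,d\s .
\end{equation*}

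Next I would identify the size of $\p_{\nu}\phi$ on $S_{r}\cap\W$. In the flat model, where $\W$ is locally a half-space and $x_{0}$ lies on $N$, even reflection across the tangent line to $N$ converts $\phi$ into the classical Green's function of $B_{r}$, so $\p_{\nu}\phi\equiv -\tfrac{1}{\pi r}$ on the upper hemisphere $S_{r}\cap\W$. For the general convex Lipschitz case, the bilipschitz straightening map $F$ of Lemma \ref{harmonic_measure_bound} together with the Littman--Stampacchia--Weinberger comparability of Green's functions for divergence-form operators with bounded measurable coefficients yields $\phi(y)\leq C\log(r/|y-x_{0}|)$ on $B_{r}\cap\W$ and $|\p_{\nu}\phi|\sim 1/r$ on $S_{r}\cap\W$, with constants depending only on the bilipschitz constant of $F$.

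Then I would bound the left-hand side by a layer-cake estimate. Splitting $u=u^{+}-u^{-}$ and applying Lemma \ref{harmonic_measure_bound} to each subharmonic piece at every scale $s\leq r$ gives $|\D u|(B_{s}\cap\W)\leq Cs$, so
\begin{equation*}
\left|\int_{B_{r}\cap\W} \phi\, d\D u\right| \;\leq\; C\int_{0}^{r}\frac{|\D u|(B_{s}\cap\W)}{s}\,ds \;\leq\; Cr .
\end{equation*}
Combining this with the identity above and dividing through by $|S_{r}\cap\W|\sim r$ delivers $\tfrac{1}{r}\bigl|\avg_{S_{r}\cap\W} u\,d\s\bigr|\leq C$, with $C=C(q_{2},\ell_{1})$ tracked through the dependence in Lemma \ref{harmonic_measure_bound}.

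The main obstacle is the second step: transferring the flat-case identity $|\p_{\nu}\phi|\sim 1/r$ on $S_{r}\cap\W$ to the convex Lipschitz case. One needs both the upper comparability of $\phi$ to the logarithmic Green's function (the LSW estimate already invoked in the previous lemma) and a matching control on the boundary-normal derivative of $\phi$ at the spherical portion $S_{r}\cap\W$, which follows from non-tangential regularity of elliptic Green's functions in Lipschitz domains.
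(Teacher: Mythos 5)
Your overall strategy matches the paper's: represent the boundary average of $u$ through a Green's function for the mixed Neumann--Dirichlet problem, use the Littman--Stampacchia--Weinberger comparison to reduce to the logarithmic kernel, and integrate $\log(r/|y-x_0|)$ against the measure $\D u$ using Lemma \ref{harmonic_measure_bound} at every scale $s\le r$; your layer-cake computation $\int_0^r |\D u|(B_s\cap\W)\,s^{-1}\,ds \le Cr$ is exactly the paper's radial integration by parts. That part is fine.

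The gap is in the final step. The Green identity gives you a bound on the \emph{weighted} average $\int_{S_r\cap\W} u\,(-\p_\nu\phi)\,d\s$, and ``dividing through by $|S_r\cap\W|\sim r$'' converts this into a bound on the unweighted average $\fint_{S_r\cap\W}u\,d\s$ only if $-\p_\nu\phi$ is \emph{exactly} constant on $S_r\cap\W$ --- true in your flat model and when $B_r\subset\W$, but not for a general convex $\W$. Mere comparability $-\p_\nu\phi\sim 1/r$ does not suffice, because $u$ changes sign on $S_r\cap\W$: the weighted integral can be small while $\int_{S_r\cap\W}u\,d\s$ is large (take $u$ positive where the weight is near its lower bound and negative where it is near its upper bound). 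Even granting a two-sided kernel bound --- which is itself delicate for the straightened operator $L$ with only bounded measurable coefficients, since its harmonic measure need not have a density comparable to arc length --- the sign problem remains. The paper avoids it by splitting the estimate into two one-sided bounds: it identifies $\fint_{S_r\cap\W}u\,d\s$ with $v(0)$, the value at the center of the harmonic replacement, and bounds $v(0)\le Cr$ from the Green representation; the matching lower bound $\fint_{S_r\cap\W}u\,d\s\ge u(0)=0$ then comes for free from the subharmonicity of $u$ (the jump condition with $\lambda_1>\lambda_2$ forces $\D u\ge 0$ as a measure) via the sub-mean-value property. That subharmonicity step is the ingredient your proposal omits, and without it, or without exact constancy of the Poisson kernel, the last line does not follow.
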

\begin{proof}
	Assume that the center of $B_{r}$ is the origin. Using the notation of the previous lemma, and assuming $F(0)=0$, we have that
	\begin{equation*}
	0 = u(0)= \tilde{u}(0)= \int_{B_{r}} \tilde{G}_{0}(y)L\tilde{u}(y) \ d y - \tilde{v}(0),
	\end{equation*}
	with $\tilde{v}$ defined as in the proof of the previous lemma, and $\tilde{G}_0$ the Green's function centered at the origin. Let
	$$I \coloneqq \int_{B_{r}}\tilde{G}_{0}(y)L u(y) \ d y.$$
	Then, as before, there are constants $c, C >0$ so that 
	$$c \int_{B_{r}} {G_{0}}(y)L\tilde{u}(y) \ dy \leq  I \leq C \int_{B_{r}} {G_{0}}(y)L\tilde{u}(y) \ dy.$$
Next, using radial symmetry of the standard Green's function it follows that
	\begin{align*}
	\int_{B_{r}} {G_{0}}(y)L\tilde{u}(y) 
	&= \int_{0}^{r} \int_{B_{s}} {G_{0}}(s,\theta)L\tilde{u}(s,\theta) s \ d\theta ds\\
    & = \int_0^r s g(s)\int_{B_{s}}L\tilde{u}(s,\theta) \ d\theta ds.
	\end{align*}
	Here, $g(s) = G_{0}(s,\theta) = -\log\left(\frac{s}{r}\right)$ for $0 \leq \theta \leq 2\pi$. Let
	$$h(s) = s\int_{S_{1}} L\tilde{u}(s,\theta) \ d\theta.$$
	Then,
	\begin{align*}
\int_{0}^{r} {G}(s,\theta) L\tilde{u}(s,\theta) s \ d\theta ds & = \int_{0}^{r} s g(s) \int_{S_{1}} L\tilde{u}(s,\theta) \ d\theta ds\\
	&= \int_{0}^{r} -s \log \left(\frac{s}{r} \right) \int_{S_{1}} L\tilde{u}(s,\theta) \ d\theta ds\\
	&= \int_{0}^{r} -\log \left(\frac{s}{r} \right) h(s) \ ds\\
	&= \int_{0}^{r} -\log \left(\frac{s}{r} \right) \frac{d}{ds}\left( \int_{0}^{s} h(t) \ dt \right) \ ds\\
	&=  \left[ -\log \left(\frac{s}{r} \right) \int_{0}^{s} h(t) \ dt \right]_{0}^{r} - C\int_{0}^{r} -\frac{1}{s} \int_{0}^{s} h(t) \ dt ds\\
	&= 0 + \lim_{s \to 0} \left(\log \left( \frac{s}{r} \right) \int_{0}^{s} h(t) \ dt \right) + \int_{0}^{r} \frac{1}{s} \int_{0}^{s} h(t) \ dt ds\\
	&\leq \int_{0}^{r} \frac{1}{s} \int_{0}^{s} t  \int_{S_{1}} L\tilde{u}(t,\theta) \ d\theta dt ds\\
	&\leq \int_{0}^{r} \frac{1}{s}Cs \ ds\\
	&= C\int_{0}^{r} \ ds\\
	&= Cr,
	\end{align*}
    where we have used the previous lemma to estimate the integral of $L\tilde{u}$ over $B_r$.  Note that 
    $\lim_{s\to 0}(\log(\frac{s}{r})\int_0^s h(t)dt)$ is bounded above by $0$ because the logarithmic term is negative for small $s$ and the function $h(t)$ is nonnegative.  
    
From this estimate we may conclude that $$\int_{B_{r}} \tilde{G}_{0}(y)L\tilde{u}(y) \ dy \leq Cr$$ as well.  Notice that $\tilde{v}-\tilde{u}$ satisfies $L(\tilde{v}-\tilde{u})=-L(\tilde{u})$ and $\tilde{u}-\tilde{v}=0$ on $S_r(0)$.  Therefore, we can conclude that $\tilde{v}(0)-\tilde{u}(0)=\int_{B_r(0)}\tilde{G}_0(x)(-L(\tilde{u}(x))) \ dx.$ Since $u(0)=0$, we may conclude that $\tilde{v}(0)=-\int_{B_r(0)}\tilde{G}_0(x)L(\tilde{u}(x)) \ dx.$ 

Now, since $F$ is bilipschitz, $v$ and $u$ have the same boundary conditions on $B_r\cap\Omega$ and $v$ is harmonic there, it follows that
 	\begin{equation*}
 	\fint_{S_{r} \cap \Omega} u \ d\s = v(0)=\tilde{v}(0)=\int_{B_r}G_0 L \tilde{u} \ dx\leq Cr.
    \end{equation*}
Since $u$ is subharmonic in $B_{r} \cap \Omega$ and $u(0) = 0$, it follows by the mean-value property for subharmonic functions that
	$$\fint_{S_{r} \cap \Omega} u \ d\s \geq 0.$$
	Therefore,
	\begin{equation*}
	\frac{1}{r} \left| \fint_{S_{r} \cap \Omega} u \ d\s \right| = \frac{1}{r}\fint_{S_{r} \cap \Omega} u \ d\s \leq C.
	\end{equation*}
\end{proof}

Now we come to our main result:

\begin{theorem}\label{Lipschitz_continuity}
	Let $r_{0}>0$ and define $\Omega_{r_{0}} \coloneqq \{x \in \Omega : \mathrm{dist}(x,S)>r_{0} \}$. Then, there is a constant $C$ such that if $u$ is a minimizer of the functional $J$, then for almost every $x \in \Omega_{r_{0}}$ we have
	$$|\nabla u(x)| \leq C.$$
\end{theorem}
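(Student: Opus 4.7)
The plan is to combine the three preceding lemmas in the Alt--Caffarelli--Friedman style, adapted to the Neumann geometry via Lemma \ref{hdharmlemma}.

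First I would reduce the theorem to a linear growth estimate at free boundary points. Fix $x_{0}\in\Omega_{r_{0}}$; we may assume $u(x_{0})\neq 0$, since $\nabla u = 0$ a.e.\ on $\{u=0\}$. Set $d = \mathrm{dist}(x_{0},\Gamma)$, so that $u$ is harmonic and of a single sign in $B_{d}(x_{0})\cap\Omega$. Standard interior gradient estimates (if $B_{d}(x_{0})\subset\Omega$) or Lemma \ref{hdharmlemma} (if $B_{d}(x_{0})$ meets $N$) yield
\begin{equation*}
|\nabla u(x_{0})| \;\leq\; \frac{C}{d}\,\sup_{B_{d}(x_{0})\cap\Omega}|u|.
\end{equation*}
Choosing $y_{0}\in\Gamma$ with $|x_{0}-y_{0}| = d$ gives $B_{d}(x_{0})\subset B_{2d}(y_{0})$, so it is enough to establish
\begin{equation*}
\sup_{B_{r}(y_{0})\cap\Omega} u^{\pm} \;\leq\; Cr
\end{equation*}
for every $y_{0}\in\Gamma$ with $\mathrm{dist}(y_{0},S)\geq r_{0}/2$ and every $r\leq r_{0}/2$.

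Next I would play the three lemmas against each other. Since $u^{\pm}$ is subharmonic on $\Omega$ with $\partial_{\nu}u^{\pm}=0$ weakly on $N$ (which can be extended to a standard sup--mean-value setting by bilipschitz flattening of $N$ followed by even reflection, as in the proof of Lemma \ref{harmonic_measure_bound}), one has
\begin{equation*}
\sup_{B_{r/2}(y_{0})\cap\Omega} u^{\pm} \;\leq\; C\fint_{S_{r}(y_{0})\cap\Omega} u^{\pm}\,d\sigma,
\end{equation*}
so the task reduces to bounding each sphere average by $Cr$. Lemma \ref{upper_estimate_on_averages} supplies the difference bound
\begin{equation*}
\fint_{S_{r}\cap\Omega} u^{+}\,d\sigma \, - \, \fint_{S_{r}\cap\Omega} u^{-}\,d\sigma \;\leq\; Cr,
\end{equation*}
with both averages nonnegative. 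For a complementary relation, Poincar\'{e}'s inequality applied to $u^{\pm}$ (which vanishes on the opposite phase, a set of positive density in $B_{r}(y_{0})$ because $y_{0}\in\Gamma$), combined with Cauchy--Schwarz and the monotonicity formula, yields
\begin{equation*}
\left(\fint_{B_{r}\cap\Omega} u^{+}\right)\cdot\left(\fint_{B_{r}\cap\Omega} u^{-}\right) \;\leq\; Cr^{2}\sqrt{\phi(r)} \;\leq\; Cr^{2}.
\end{equation*}
Converting these ball averages into sphere averages via the co-area formula and the monotonicity of $s\mapsto \fint_{S_{s}}u^{\pm}$ that follows from subharmonicity, and then splitting on which of $\fint_{S_{r}}u^{+},\fint_{S_{r}}u^{-}$ is larger, an elementary algebraic argument forces each to be at most $Cr$. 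Substituting back into the reduction step yields $|\nabla u(x_{0})|\leq C$.

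The main technical obstacle is producing the sup--mean-value inequality and the Poincar\'{e} inequality with constants uniform in $y_{0}$ as $y_{0}$ approaches the non-smooth convex set $N$. This requires both a quantitative nondegeneracy statement guaranteeing $|\{u\leq 0\}\cap B_{r}(y_{0})|\geq c|B_{r}|$ (so Poincar\'{e}'s constant is controlled), and a bilipschitz flattening of $N$ near $y_{0}$ together with an even reflection, analogous to the construction in Lemma \ref{harmonic_measure_bound}, so that the standard interior estimates apply on a full ball. It is precisely here that two-dimensionality and convexity are essential, exactly as in inequality (\ref{Eqn:LambdaBound}): an opening angle exceeding $\pi$ would both destroy the monotonicity of $\phi$ and rule out uniform harmonic gradient bounds up to the vertex.
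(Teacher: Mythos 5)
Your overall architecture is the same as the paper's: the paper reduces to the case $d(x,N)\leq d(x,\Gamma)<r_{0}$ and then runs the second proof of Theorem 5.3 of Alt--Caffarelli--Friedman, citing exactly the three ingredients you identify (the Harnack/sup--mean-value inequality from the one-phase Neumann paper, the monotonicity formula of Lemma \ref{monotonicity_formula}, and the average bound of Lemma \ref{upper_estimate_on_averages}), with convexity guaranteeing that the polar-coordinate and reflection arguments go through. Your reduction to linear growth at free boundary points, the use of Lemma \ref{hdharmlemma} for the gradient-from-sup step, and the ``product bound plus difference bound'' algebra are all faithful to that scheme.

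There is, however, one genuine gap in the step where you derive the product bound $\bigl(\fint_{B_r}u^{+}\bigr)\bigl(\fint_{B_r}u^{-}\bigr)\leq Cr^{2}\sqrt{\phi(r)}$. You obtain it from Poincar\'e's inequality applied to $u^{\pm}$, ``which vanishes on the opposite phase, a set of positive density in $B_{r}(y_{0})$,'' and you defer the required estimate $|\{u\leq 0\}\cap B_{r}(y_{0})|\geq c|B_{r}|$ to the list of technical obstacles. That estimate is not available at this stage and cannot be assumed: positive density of the non-positive phase at free boundary points is ordinarily a \emph{consequence} of Lipschitz continuity together with nondegeneracy, so invoking it here is circular; moreover, at a point of $\Gamma=\partial\{u>0\}$ where the free boundary is effectively one-phase, $\{u<0\}$ may be empty nearby and $\{u=0\}$ may a priori have measure zero in $B_{r}(y_{0})$. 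The correct mechanism --- and the one implicit in the paper's citation of ACF --- is the trace-type estimate for a nonnegative subharmonic function $v$ with $v(y_{0})=0$, namely $\fint_{S_{r}}v\,d\sigma\leq C\bigl(r^{2-n}\int_{B_{r}}|\nabla v|^{2}\bigr)^{1/2}$, which is proved from the representation of the spherical average in terms of $\Delta v$ and Cauchy--Schwarz and requires no density information about the zero set. With that substitution (and its Neumann version via the flattening/reflection you already describe), the rest of your algebra is sound: the degenerate case where one of the two averages is small is covered by the two-sided difference bound $|\fint_{S_{r}}u^{+}-\fint_{S_{r}}u^{-}|\leq Cr$ from Lemma \ref{upper_estimate_on_averages}, and the case where both are large is excluded by the product bound.
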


\begin{proof} Let $x \in \W_{r_0}$.  We know from \cite{ACF} that there is a $C > 0$ so that if $d(x,\p\W) \geq r_0$, then $|\nab u(x)| \leq C$. Moreover, if $d(x, \p\W) > d(x,\G),$ the argument in \cite{ACF} will also go through. On the other hand, if $d(x,\G) \geq r_0$, then standard interior harmonic regularity or Lemma \ref{hdharmlemma} implies the desired gradient bound.  So we are primarily interested in the case where $d(x,N) \leq d(x,\G) < r_0$.  Following the argument in \cite{R}, Theorem 2, it suffices to control $|\nab u|$ on $\p \W \cap B_{r_0}(\G)$.  So let $x \in \p \W \cap B_{r_0}(\G)$.  We follow the argument in \cite{ACF}, second proof of Theorem 5.3.  Since the Harnack inequality (\cite{R}), monotonicity formula (Lemma \ref{monotonicity_formula}), and upper estimate on averages (Lemma \ref{upper_estimate_on_averages}) all hold in our context, the argument proceeds in the same fashion.  Note that convexity implies that the use of polar coordinates in the proof will work as desired.
\end{proof}

\section{Numerical Scheme}

In this section we present our numerical scheme. Our approach is to apply a gradient flow to a version of $J$. In the modification, the phase term in the functional is approximated by a regularized transition layer. This approximation of the energy allows us to define the gradient flow in the classical sense. We then implement the gradient flow using a finite difference scheme. The free boundary is then recovered as the appropriate contour of the data.

\subsection{Relaxed Functional and Gradient Flow}
Define a sequence of width $\varepsilon$ transition layers $\varphi_{\varepsilon}\in C^{1,1}(\mathbb{R};[0,1])$  as a class of functions satisfying the following properties:
\begin{enumerate}
\item $\varphi_1 \in C^{1,1}(\mathbb{R};[0,1])$,
\item $\varphi_1(v)=\lambda_1^2$ if $v\geq 1$,
\item $\varphi_1(v)=\lambda_2^2$ if $v\leq  0$,
\item $\varphi_1^{\prime}(v)\geq 0$,
\item $\varphi_{\varepsilon}=\varphi_1\left(\frac{v}{\varepsilon}\right)$.
\end{enumerate}
Clearly $\varphi_{\epsilon}(v)$ converges pointwise to $\lambda^2(v)$ as $\varepsilon\rightarrow 0$. Moreover, since $\varphi^{\prime}_{\varepsilon}$ is a sequence of $C^{0,1}$ functions compactly supported on $[0,\varepsilon]$ satisfying 
\begin{equation*}
\int_{0}^{\varepsilon} \varphi_{\varepsilon}^{\prime}(v)dv=\frac{1}{\varepsilon}\int_{0}^{\varepsilon} \varphi_{1}^{\prime}\left(\frac{v}{\varepsilon}\right)dv=\int_0^1 \varphi_1^{\prime}(v)dv=\lambda_1^2-\lambda_2^2,
\end{equation*}
it follows that in the sense of distributions $\varphi_{\varepsilon}^{\prime}\rightarrow \left(\lambda_2^2-\lambda_1^2\right)\delta(v)$, where $\delta(v)$ denotes the Dirac delta function. The relaxed functional $J_{\varepsilon}:K\mapsto \mathbb{R}^+$ is then defined by
\begin{equation}\label{eqn:RelaxedFunctional}
J_{\varepsilon}[v]=\int_{\Omega}\left(\left| \nabla v \right|^2+Q^2(x)\varphi_{\varepsilon}(v)\right) \id x.
\end{equation}

Since $J_{\varepsilon}$ is convex in $\nabla v$, it follows from the direct method of the calculus of variations that $J_{\varepsilon}$ has a minimum in $K$ \cite{jost1998calculus}. Moreover, since $\varphi_{\varepsilon}\in C^{1,1}$ it follows that minimizers $u_{\varepsilon}$ of $J_{\varepsilon}$ will satisfy the following nonlinear Poisson equation:
\begin{equation}\label{Eqn:RelaxedStrongEL}
\begin{cases}
\displaystyle{2\Delta u_{\varepsilon}=Q(x)\varphi^{\prime}(u_{\varepsilon})}\\
\displaystyle{\left.  \partial_{\nu}u_{\varepsilon}\right|_N=0}\\
\displaystyle{\left. u_{\varepsilon} \right|_{S}=u_0}
\end{cases},
\end{equation}
where the normal derivative $\partial_{\nu}u_{\varepsilon}$ is interpreted in the weak sense; see Definition 1.

We now establish that minimizers of $J_{\varepsilon}$ converge up to a subsequence to a minimizer of $J$ with respect to the $H^1$ norm. The failure to improve from convergence of subsequences to full convergence results from the possible non-uniqueness of minimizers. In practice, however, we expect the minimizers of $J_{\varepsilon}$ will be generated using a consistent numerical scheme and hence the minimizers of the relaxed functional will strongly converge in $H^1$ to the minimizer of $J$.

\begin{theorem} \label{Thm:StrongConvergence}
Let $u_{\varepsilon}\in K$ be minimizers of $J_{\varepsilon}$. Then, there exists $u\in K$ minimizing $J$ such that $J_{\varepsilon}[u_{\varepsilon}]\rightarrow J[u]$ and there exists a subsequence $u_{\varepsilon_k}$ such that $u_{\varepsilon_k}\stackrel{H^1}{\rightarrow}u$.
\end{theorem}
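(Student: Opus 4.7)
The plan is a standard $\Gamma$-convergence / direct method argument in three stages: uniform a priori bounds giving compactness, identification of the weak limit as a minimizer of $J$, and an upgrade from weak to strong $H^1$ convergence.

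First, to obtain compactness, fix an admissible competitor $w\in K$ (for instance the harmonic extension of $u_0$). Since $\lambda_2^2\leq\varphi_\varepsilon\leq \lambda_1^2$ and $Q\leq M$, one has $J_\varepsilon[w]\leq \|\nabla w\|_{L^2}^2 + M^2\lambda_1^2|\Omega|=:C_0$ independent of $\varepsilon$. Minimality gives $\|\nabla u_\varepsilon\|_{L^2}^2\leq J_\varepsilon[u_\varepsilon]\leq C_0$, and a Poincar\'e inequality applied to $u_\varepsilon-w$ (which vanishes on $S$) then bounds $\|u_\varepsilon\|_{H^1}$ uniformly. By Banach--Alaoglu and Rellich--Kondrachov, there is a subsequence $u_{\varepsilon_k}$ and some $u\in H^1(\Omega)$ with $u_{\varepsilon_k}\rightharpoonup u$ weakly in $H^1$, strongly in $L^2$, and pointwise a.e.; weak continuity of the trace operator gives $u\in K$.

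Second, I would show $u$ minimizes $J$ by a liminf/limsup sandwich. Weak lower semicontinuity of the Dirichlet energy gives $\int_\Omega|\nabla u|^2\leq \liminf_k \int_\Omega|\nabla u_{\varepsilon_k}|^2$. For the potential term, pass to a further subsequence on which $u_{\varepsilon_k}\to u$ a.e. On $\{u>0\}$, eventually $u_{\varepsilon_k}(x)/\varepsilon_k\to\infty$, so $\varphi_{\varepsilon_k}(u_{\varepsilon_k})=\lambda_1^2=\lambda^2(u)$; on $\{u<0\}$, eventually $u_{\varepsilon_k}<0$, so $\varphi_{\varepsilon_k}(u_{\varepsilon_k})=\lambda_2^2=\lambda^2(u)$; on $\{u=0\}$ one has the harmless pointwise bound $\varphi_{\varepsilon_k}(u_{\varepsilon_k})\geq \lambda_2^2=\lambda^2(0)$. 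Fatou's lemma (applicable since $\varphi_{\varepsilon_k}$ is uniformly bounded by $\lambda_1^2$) then yields
\[
\int_\Omega Q^2\lambda^2(u)\,dx\leq \liminf_k \int_\Omega Q^2\varphi_{\varepsilon_k}(u_{\varepsilon_k})\,dx.
\]
For the upper bound, fix any $v\in K$; since $\varphi_\varepsilon(v)\to\lambda^2(v)$ pointwise and is uniformly bounded, dominated convergence gives $J_\varepsilon[v]\to J[v]$. Minimality of $u_{\varepsilon_k}$ then forces $\liminf_k J_{\varepsilon_k}[u_{\varepsilon_k}]\leq J[v]$; combined with the two liminf inequalities this gives $J[u]\leq J[v]$ for every $v\in K$, so $u$ minimizes $J$. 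Taking $v=u$ squeezes $J_{\varepsilon_k}[u_{\varepsilon_k}]\to J[u]$.

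Third, I would upgrade to strong $H^1$ convergence. Write $J_{\varepsilon_k}[u_{\varepsilon_k}]=A_k+B_k$ with $A_k=\int|\nabla u_{\varepsilon_k}|^2$ and $B_k=\int Q^2\varphi_{\varepsilon_k}(u_{\varepsilon_k})$. We have $A_k+B_k\to J[u]=A_\infty+B_\infty$ where $A_\infty\leq\liminf A_k$ and $B_\infty\leq\liminf B_k$. A subsequence argument forces $A_k\to A_\infty$: any subsequential limit of $A_k$ strictly exceeding $A_\infty$ would make the corresponding subsequential limit of $B_k$ strictly smaller than $B_\infty$, contradicting the liminf bound. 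Thus $\|\nabla u_{\varepsilon_k}\|_{L^2}\to\|\nabla u\|_{L^2}$; weak convergence plus norm convergence in the Hilbert space $L^2$ yields $\nabla u_{\varepsilon_k}\to\nabla u$ strongly in $L^2$, and together with $L^2$-convergence of $u_{\varepsilon_k}$ this gives strong $H^1$ convergence. The main obstacle throughout is that $\{u=0\}$ may have positive measure, ruling out pointwise convergence of $\varphi_{\varepsilon_k}(u_{\varepsilon_k})$ there and preventing a direct dominated-convergence treatment of the potential term; the Fatou-based inequality sidesteps this cleanly because the inequality $\varphi_\varepsilon\geq\lambda_2^2=\lambda^2(0)$ is built into the transition layer, so the zero set contributes no deficit.
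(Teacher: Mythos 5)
Your proposal is correct and follows essentially the same route as the paper: uniform energy bounds plus Poincar\'e give weak $H^1$ compactness, lower semicontinuity identifies the limit as a minimizer with convergence of energies, and convergence of the total energy together with lower semicontinuity of each term upgrades weak to strong $H^1$ convergence. The only real difference is cosmetic --- the paper obtains convergence of the full energy sequence from the monotonicity of $\varepsilon \mapsto J_{\varepsilon}[u_{\varepsilon}]$ (using $\varphi_{\varepsilon'} \geq \varphi_{\varepsilon}$ for $\varepsilon' < \varepsilon$), whereas you use a liminf/limsup sandwich against a fixed competitor, and your Fatou treatment of the potential term on $\{u=0\}$ is actually more explicit than what the paper writes.
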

\begin{proof}
Let $u_{\varepsilon}\in K$ be minimizers of $J_{\varepsilon}$ and $\bar{u}\in K$ be a minimizer of $J$. Since $\varphi_{\varepsilon}$ is a monotone increasing sequence of functions as $\varepsilon \rightarrow 0$ it follows for all $\varepsilon>0$ that $J_{\varepsilon}[\bar{u}]\leq J[\bar{u}]<\infty$. Consequently, for $\varepsilon^{\prime}<\varepsilon$ it follows that
\begin{equation*}
J_{\varepsilon}[u_{\varepsilon}] \leq J_{\varepsilon}[u_{\varepsilon^{\prime}}]\leq J_{\varepsilon^{\prime}}[u_{\varepsilon^{\prime}}]<J[\bar{u}]
\end{equation*}
and thus $J_{\varepsilon}[u_{\varepsilon}]$ is a bounded monotone increasing sequence as $\varepsilon \rightarrow 0$ and hence converges. Moreover, it follows from this estimate and Poincare's inequality \cite{adams2003sobolev} that $u_{\varepsilon}$ is bounded in the $H^1$ norm and hence there exists $u^*\in K$ and a subsequence $u_{\varepsilon_k}$ such that $u_{\varepsilon}\stackrel{L^2}{\rightarrow}u^*$, $u_{\varepsilon}\stackrel{H^1}{\rightharpoonup}u^*$, and $u_{\varepsilon}\rightarrow u^*$ pointwise. Therefore,
\begin{equation*}
J[\bar{u}]\geq \lim_{\varepsilon \rightarrow 0}J_{\varepsilon}[u_{\varepsilon}]=\lim_{k\rightarrow \infty} J_{\varepsilon_k}[u_{\varepsilon_k}]=J[u^*]\geq J[\bar{u}].
\end{equation*}
Since the lower and upper bounds in the above chain of inequalities are equal it follows that all of the inequalities are in fact equalities and hence
\begin{equation*}
\lim_{\varepsilon \rightarrow 0}J_{\varepsilon}[u_{\varepsilon}]=J[u^*]=J[\bar{u}]=\min_{v\in K}J[v],
\end{equation*}
and therefore $u^*$ is a minimizer of $J$ as well. 

Finally, we show strong convergence of the subsequence. Since $\nabla u_{\varepsilon}\rightharpoonup \nabla u^*$ it follows that
\begin{align*}
 \lim_{k\rightarrow \infty}\left\| \nabla u^*-\nabla u_{\varepsilon_k} \right\|_{L^2}^2&=\lim_{k\rightarrow \infty}\left(\|\nabla u^*\|_{L^2}^2-2\langle \nabla u^*, \nabla u_{\varepsilon_k} \rangle+\|\nabla u_{\varepsilon_k}\|_{L^2}^2\right)\\
&=\lim_{k\rightarrow \infty}\left(\|\nabla u^*\|_{L^2}^2-2\langle \nabla u^*, \nabla u_{\varepsilon_k} \rangle+\|\nabla u_{\varepsilon_k}\|_{L^2}^2\right)\\
&\,\,\,\,+\lim_{k\rightarrow \infty}\left(\int_{\Omega}Q^2(x)\left(\varphi_{\varepsilon_k}(u_{\varepsilon_k})-\varphi_{\varepsilon_k}(u_{\varepsilon_k})\right)dx\right)\\
&=-J[u^*]+J[u^*]=0.
\end{align*}
\end{proof}

The next result ensures that a convergent sequence of local minimizers of $J_{\varepsilon}$ converges to a local minimizer of $J$. 

\begin{theorem} \label{Thm:LocalConvergence} Let $u_\varepsilon$ be a sequence of local minimizers of $J_{\varepsilon}$ in the the sense that there exists uniform $\delta>0$ such that $J_{\varepsilon}[u_{\varepsilon}]<J_{\varepsilon}[v]$ for all $v$ satisfying $\|u_{\varepsilon}-v\|_{H^1}<\delta$. If $u_{\varepsilon_k}\stackrel{H^1}{\rightarrow}u$, then $u$ is a local minimizer of $J$. 
\end{theorem}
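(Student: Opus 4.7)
The plan is to transfer the local minimality inequality from $u_{\varepsilon_k}$ to $u$ by sandwiching $J_{\varepsilon_k}[u_{\varepsilon_k}]$ between $J[u]$ and $J[v]$ in the limit. I will set $\delta' := \delta/2$ as the local-minimality radius for $u$ and show $J[u]\leq J[v]$ whenever $\|v-u\|_{H^1}<\delta'$. For any such $v$, strong $H^1$ convergence of $u_{\varepsilon_k}$ forces $\|u_{\varepsilon_k}-v\|_{H^1}<\delta$ for $k$ large, so the hypothesis yields
$$J_{\varepsilon_k}[u_{\varepsilon_k}]\leq J_{\varepsilon_k}[v].$$

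The upper side of the sandwich is immediate: since $\{\varphi_\varepsilon\}$ is monotone non-decreasing as $\varepsilon\downarrow 0$ with pointwise limit $\lambda^2$, one has $\varphi_{\varepsilon_k}(v)\leq \lambda^2(v)$ pointwise and hence $J_{\varepsilon_k}[v]\leq J[v]$ for every $k$. Therefore $\limsup_{k\to\infty} J_{\varepsilon_k}[u_{\varepsilon_k}]\leq J[v]$.

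The harder part is the lower bound $J[u]\leq \liminf_{k\to\infty} J_{\varepsilon_k}[u_{\varepsilon_k}]$. Strong $H^1$ convergence handles the gradient contribution exactly, giving $\int_\Omega|\nabla u_{\varepsilon_k}|^2\,dx\to \int_\Omega|\nabla u|^2\,dx$. For the potential contribution I would pass to a further subsequence so that $u_{\varepsilon_k}\to u$ a.e., fix an auxiliary parameter $\varepsilon>0$, and use the monotonicity $\varphi_{\varepsilon_k}\geq \varphi_\varepsilon$ (valid for $\varepsilon_k\leq \varepsilon$) together with continuity and boundedness of the intermediate $\varphi_\varepsilon$ to apply dominated convergence, obtaining
$$\liminf_{k\to\infty}\int_\Omega Q^2\,\varphi_{\varepsilon_k}(u_{\varepsilon_k})\,dx\geq \int_\Omega Q^2\,\varphi_\varepsilon(u)\,dx.$$
Sending $\varepsilon\downarrow 0$ with monotone convergence on the right produces $\int_\Omega Q^2\,\lambda^2(u)\,dx$, so chaining $J[u]\leq \liminf_k J_{\varepsilon_k}[u_{\varepsilon_k}]\leq \limsup_k J_{\varepsilon_k}[u_{\varepsilon_k}]\leq J[v]$ closes the argument. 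This lower bound is the main obstacle because pointwise convergence of $\varphi_{\varepsilon_k}(u_{\varepsilon_k})$ to $\lambda^2(u)$ can fail on $\{u=0\}$, a set that may have positive measure; the two-step detour through the continuous $\varphi_\varepsilon$, which mirrors the lower-semicontinuity strategy implicit in Theorem \ref{Thm:StrongConvergence}, is precisely what circumvents this failure.
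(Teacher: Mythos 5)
Your proposal is correct and follows the same skeleton as the paper's proof: the $\delta/2$ triangle-inequality argument that makes $v$ an admissible competitor against $u_{\varepsilon_k}$ for large $k$, followed by passing to the limit in $J_{\varepsilon_k}[u_{\varepsilon_k}]\leq J_{\varepsilon_k}[v]$. The only difference is that the paper simply asserts the limits $J[u]=\lim_k J_{\varepsilon_k}[u_{\varepsilon_k}]$ and $\lim_k J_{\varepsilon_k}[v]=J[v]$, whereas you justify them --- the monotone bound $\varphi_{\varepsilon_k}\leq\lambda^2$ for the upper side, and the two-step detour through a fixed continuous $\varphi_{\varepsilon}$ for the lower-semicontinuity side, which is precisely what is needed to handle the possible failure of pointwise convergence of $\varphi_{\varepsilon_k}(u_{\varepsilon_k})$ on $\{u=0\}$ --- so your version is, if anything, more complete than the paper's.
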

\begin{proof}
Suppose $v\in K$ satisfies $\|v-u\|_{H^1}<\delta/2$. Since $u_{\varepsilon}\stackrel{H^1}{\rightarrow}u$ there exists $\varepsilon^{\prime}$ such that $\varepsilon<\varepsilon^{\prime}$ implies $\|u-u_{\varepsilon}\|<\delta/2$. Consequently, applying the triangle inequality it follows that, upon passing to a subsequence $u_{\varepsilon_k}$ to ensure pointwise convergence, that
\begin{equation*}
J[u]=\lim_{k\rightarrow \infty}J_{\varepsilon_k}[u_{\varepsilon_k}]\leq \lim_{k\rightarrow \infty}J_{\varepsilon_k}[v]=J[v].
\end{equation*}
\end{proof}

Solutions to Eq. (\ref{Eqn:RelaxedStrongEL}) can be generated by applying a gradient flow to $J_{\varepsilon}$. Namely, we consider solutions $v:\mathbb{R}^+\times \Omega\mapsto \mathbb{R}$ to the following reaction diffusion equation:
\begin{equation}\label{Eqn:GradientFlow}
\begin{cases}
v_t=2\Delta v-Q(x)\varphi^{\prime}_{\varepsilon}(v)\\
\left.\partial_{\nu}v\right|_N=0\\
\left.v\right|_S=u_0\\
v(0,x)=v_0(x)
\end{cases},
\end{equation}
where $v_0\in K$. If we consider \eqref{Eqn:GradientFlow} as an infinite dimensional dynamical system, we find that $J_{\varepsilon}$ is a Lyapunov function and consequently solutions $v(x,t)$ satisfy
\begin{equation}
\lim_{t\rightarrow \infty}v(x,t)=u_{\varepsilon}(x)\in K,
\end{equation}
where $u_{\varepsilon}$ is a (local) minimizer of $J_{\varepsilon}$ and hence is a solution to Eq. (\ref{Eqn:RelaxedStrongEL}); see \cite{robinson2001infinite} Chapter 11.

\begin{rk} By Theorem \ref{Thm:StrongConvergence}, the choice of $v_0$ determines whether the gradient flow converges to a global or local minimum. That is, for all $\varepsilon>0$ if $v_0$ lies within the basin of attraction of a global minimizer $J_{\varepsilon}$, then $u_{\varepsilon}$ converges strongly to a minimizer of $J$. However, in practice we can only assess convergence of $u_{\varepsilon}(x)$ and thus, by Theorem \ref{Thm:LocalConvergence}, we can only ensure convergence to a local minimum of $J$.
\end{rk}

\subsection{Finite Difference Scheme on Parallelogram Domains}
We now restrict our attention to the homogeneous case $Q=1$ with $\lambda_1=0$ and $\lambda_2=1$. The domains we consider are parallelograms $\Omega_{\theta}$ defined in coordinates $(\xi,\eta)\in [0,1]\times [0,1]$ by:
\begin{equation}
\Omega_{\theta}=\{(x,y)\in \mathbb{R}^2: (x,y)=(\xi+\eta\cos(\theta),\eta \sin(\theta)) \}
\end{equation} 
with $N=\{\xi=0\}\bigcup \{\eta =1\}$; see Fig \ref{fig:Parallelogram}. The Dirichlet boundary conditions on $S=\partial \Omega_{\theta}\setminus N$ are given by $u_0=\left.\bar{u}_0^A\right|_{S}$ with $\bar{u}_0^A:\Omega_{\theta}\mapsto \mathbb{R}$ defined by
\begin{equation}
\bar{u}^A_0(x,y)=\begin{cases}
-A & x \leq x_0-\delta \\
\displaystyle{A\sin^3\left(\frac{\pi(x-x_0)}{2\delta}\right)} & |x-x_0|<\delta\\
A & x \geq x_0+\delta
\end{cases},
\end{equation}
 where $A,\delta, x_0\in \mathbb{R}$ are parameters satisfying $A>0$ and $0<\delta <x_0<1+\delta$. The Dirichlet boundary condition is chosen so that there is a width $2\delta$ transition between phases at $x_0$; see Fig. \ref{fig:Parallelogram}. We call the point on the parallelogram defined by $\xi=0$ and $\eta=1$ the Neumann corner. 
 
\begin{figure}[ht]
\begin{center}
\includegraphics[width=.65\textwidth]{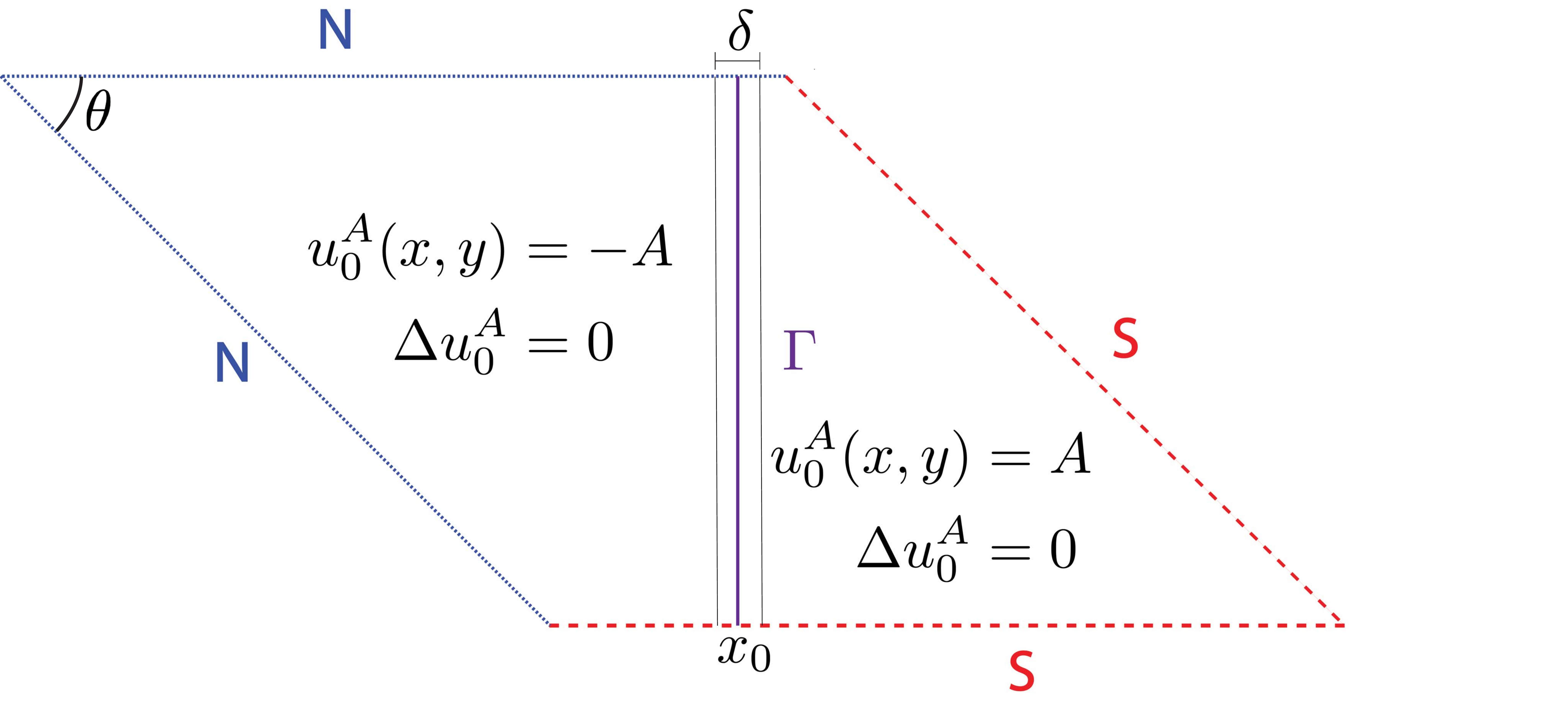}
\end{center}
\caption{Schematic diagram of the parallelogram domains. The function $u_0(x,y)$ defines the Dirichlet boundary conditions on $S$ and is taken as initial data for the gradient flow.}\label{fig:Parallelogram}
\end{figure}

To approximate solutions of the free boundary problem we consider the relaxed functional $J_{\varepsilon}$ with transition layer:
\begin{equation}
\varphi_{\varepsilon}(v)=\begin{cases}
1 & v \geq\varepsilon\\
0 & v \leq 0\\
\frac{1}{2}-\frac{1}{2}\cos\left(\frac{2\pi v}{\varepsilon}\right) & 0<v< \varepsilon.
\end{cases}
\end{equation}
We then apply the gradient flow given by Eq. (\ref{Eqn:GradientFlow}) with initial data:
\begin{equation}
v(0,x)=\bar{u}_0^A(x,y).
\end{equation}
Note that clearly $\bar{u}_0^A(x,y)$ is harmonic outside of the set $|x-x_0|<\delta$ and satisfies the Neumann boundary condition everywhere but does not satisfy Eq. (\ref{Eqn:RelaxedStrongEL}). Indeed, it follows from Eq. (\ref{Eqn:GradientFlow}) and a calculation that if $A>1$, then there exists $0<\delta^{\prime}<\delta$ such that
\begin{equation}
\begin{cases}
\left.v_t\right|_{t=0}\leq 0 &\text{ if } -\delta \leq x< x_0\\
\left.v_t\right|_{t=0}\geq 0 & \text{ if } x_0< x \leq \delta
\end{cases}.
\end{equation}
Consequently, under the gradient flow the positive phase will ``invade'' the negative phase, pushing the free boundary to the left. Moreover, a simple estimate yields the lower bound:
\begin{equation}
J_{\varepsilon}[v_0^A]\geq C\frac{A^2}{\delta}
\end{equation}
and thus $A$ controls the amount of energy in the system. Therefore, $A$ can be used as a knob to control the terminal point of the free boundary under the gradient flow. In particular, as we will show in the next section, there exists a critical value of $A$ in which the free boundary must pass through the Neumann corner or jump from the top to the left Neumann boundary.

To numerically approximate the gradient flow we implement a finite difference scheme. Note that in $(\xi, \eta)$  coordinates the gradient flow for the relaxed problem is given by
\begin{equation}\label{Eqn:FiniteDiff}
\begin{cases}
\displaystyle{v_t=2\left(\csc^2(\theta)v_{\xi \xi}-2\cot(\theta)\csc(\theta)v_{\xi \eta}+\csc^2(\theta)v_{\eta \eta}\right)-\varphi_{\varepsilon}^{\prime}(v)}\\
\displaystyle{\left.-\cot(\theta)v_{\xi}+\csc(\theta)v_{\eta}\right|_{\eta=1}=0}\\
\displaystyle{\left.-\csc(\theta)v_{\xi}+\cot(\theta)v_{\eta}\right|_{\xi =0}=0}\\
\left.v\right|_{S}=u_0(x)\\
v(0,x)=v_0(x)
\end{cases}.
\end{equation}
The spatial derivative operators are approximated using second order centered differences with uniform spacing $h$. On the Neumann boundaries we use ``ghost'' points to close the equations and the evolution in time is implemented using the Crank--Nicolson method \cite{strikwerda2004finite}. The convergence of the gradient flow to a steady state $u^*_h$ is assessed by computing $J_{\varepsilon}$ on each time step. Furthermore, to ensure convergence to a (local) minimizer of the original problem we slave the width of the transition layer to the spacing of the finite difference scheme by setting $\varepsilon=2h$. The mesh is then refined until convergence of $J_{\varepsilon}[u^*_h]$. Therefore, as the mesh is refined, the functions $u^*_h$ form a sequence of approximate minimizers of $J_{2\varepsilon}$, which, by Theorem \ref{Thm:LocalConvergence}, converge to a local minimum of $J$.

\section{Numerical Results}
In this section we present the results of our numerical experiments as well as a discussion of the implication of these results.
\subsection{Obtuse Angle}
In Figure \ref{Fig:Obtuse} we present the results of the finite difference scheme applied to Eq. (\ref{Eqn:FiniteDiff}) for the fixed parameters $x_0=.85$, $\delta=.01$, $\theta=5\pi/4$ and $A=1.19$--$1.33$. Figures \ref{Fig:Obtuse}(A) and \ref{Fig:Obtuse}(B) are contour plots of the numerical approximation to the solution of the free boundary problem. Specifically, Figures \ref{Fig:Obtuse}(A-B) illustrate solutions to the free boundary terminating on the top and left Neumann boundaries respectively. In Figure \ref{Fig:Obtuse}(C) we plot the time evolution of $J_{\varepsilon}$ under the gradient flow for various values of $A$. The numerical scheme indicates that as $A$ is increased the free boundary passes smoothly through the corner point. That is, as $A$ varies there is no discontinuous jump in the spatial coordinate of the terminal point of the free boundary lying on the Neumann boundary. 

\begin{figure}[ht]
\begin{center}
        \includegraphics[width=.75\textwidth]{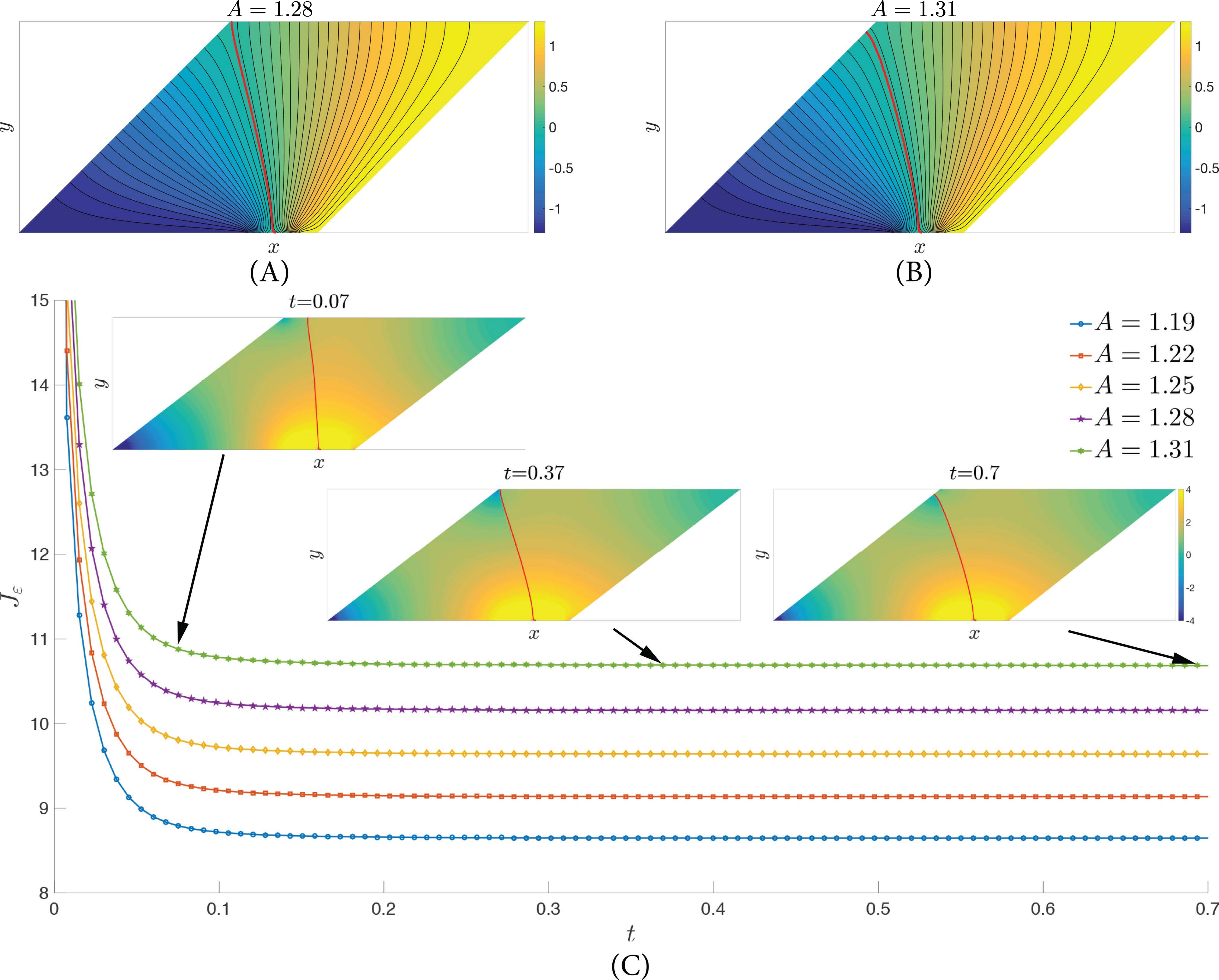}
        \caption{\textbf{(A-B)} Contour plot of numerical solutions to the free boundary problem for $A=1.28$ and $A=1.31$. The solid red curve corresponds to the numerical approximation of the free boundary. \textbf{(C)} Plot of the time evolution of the relaxed energy $J_{\varepsilon}$ under the gradient flow for various values of $A$. The inset figures are time snapshots of the evolution of the free boundary under the gradient flow overlaid on top of a contour plot of $\ln(J_{\varepsilon})$.}
        \label{Fig:Obtuse}
        \end{center}
\end{figure}

\subsection{Right Angle} In Figure \ref{Fig:Right} we present the results of the finite difference scheme applied to Eq. (\ref{Eqn:FiniteDiff}) for the fixed parameters $x_0=.2$, $\delta=.01$, $\theta=\pi/2$ and $A=2.6$--$3.4$. Again, Figures \ref{Fig:Right}(A-B) illustrate solutions to the free boundary terminating on the top and left Neumann boundaries while Figure \ref{Fig:Right}(C) is a plot of the time evolution of $J_{\varepsilon}$. In contrast with the obtuse angle case, as $A$ is increased the free boundary does not pass smoothly through the corner point. However, for this particular geometry this may be an artifact of the numerical scheme. In particular, for all mesh sizes we numerically observed that near the corner point the free boundary enters a ball of radius on the order of the mesh size before ``jumping'' to the other Neumann boundary; see Figure \ref{Fig:Right}(C) insets. That is, the jumping was numerically observed to depend on the mesh size $h$.

\begin{figure}[ht]
\begin{center}
        \includegraphics[width=.75\textwidth]{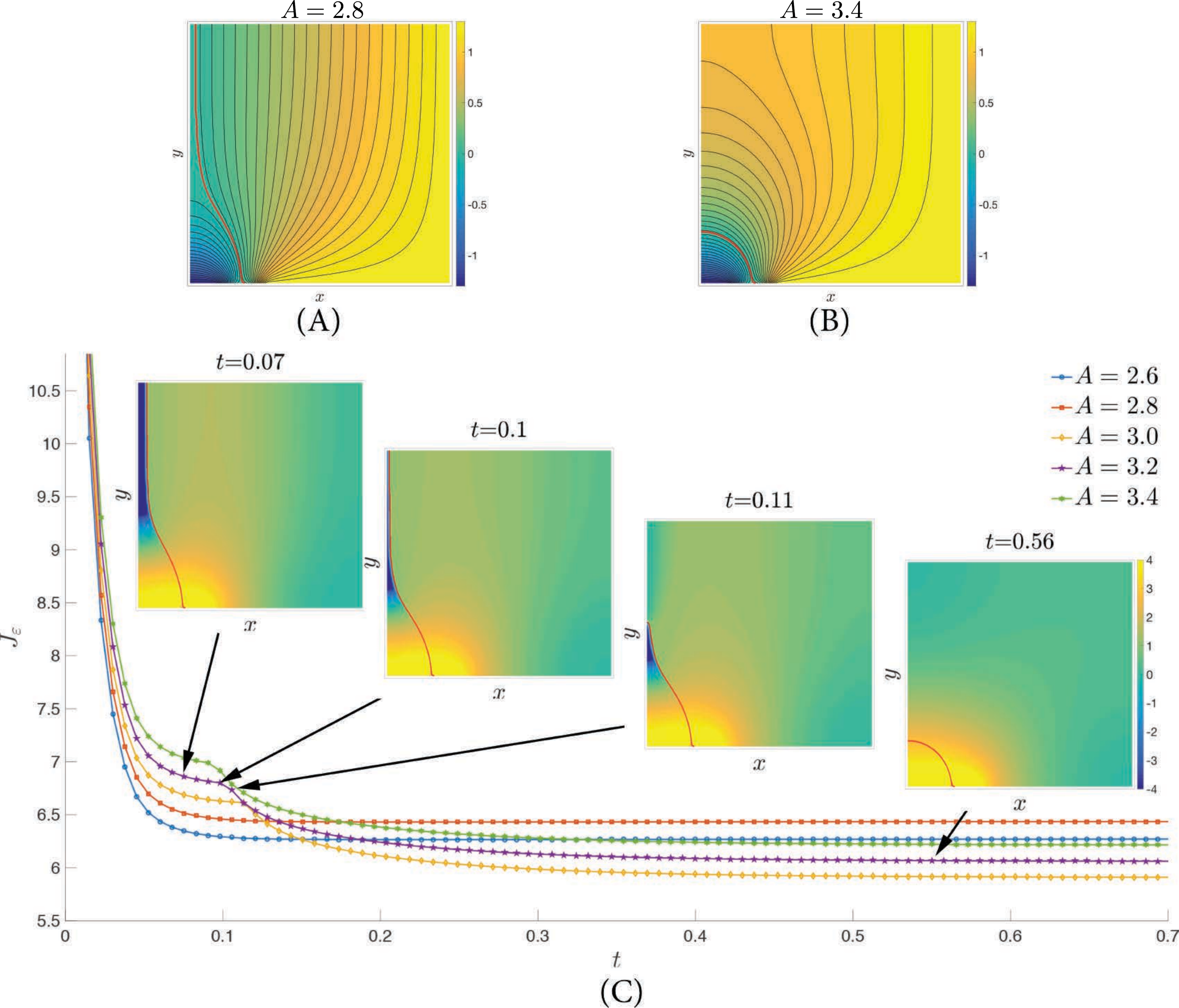}
        \caption{\textbf{(A-B)} Contour plot of numerical solutions to the free boundary problem for $2.8$ and $A=3.4$. The solid red curve corresponds to the numerical approximation of the free boundary. \textbf{(C)} Plot of the time evolution of the relaxed energy $J_{\varepsilon}$ under the gradient flow for various values of $A$. The inset figures are time snapshots of the evolution of the free boundary under the gradient flow overlaid on top of a contour plot of $\ln(J_{\varepsilon})$.}
        \label{Fig:Right}
        \end{center}
\end{figure}

\subsection{Acute Angle} In Figure \ref{Fig:Acute} we present the results of the finite difference scheme applied to Eq. (\ref{Eqn:FiniteDiff}) for the fixed parameters $x_0=.2$, $\delta=.01$, $\theta=\pi/4$ and $A=3.01$--$3.04$. Again, Figures \ref{Fig:Right}(A-B) illustrate solutions to the free boundary terminating on the top and left Neumann boundaries while Figure \ref{Fig:Right}(C) is a plot of the time evolution of $J_{\varepsilon}$. In contrast with both the obtuse and right angle cases, our numerical experiments indicate that the free boundary does not pass smoothly through the corner point and this is not an artifact of the numerical scheme. That is, for sufficiently small $h$ the jumping was numerically observed to not depend on the mesh size. In fact, in contrast with the obtuse and right angles cases as $A$ is varied the steady state of the free boundary never passes through the corner point. More precisely, there exists a neighborhood about the corner point in which the steady state of the free boundary does not enter. 

\begin{figure}[ht]
\begin{center}
        \includegraphics[width=.75\textwidth]{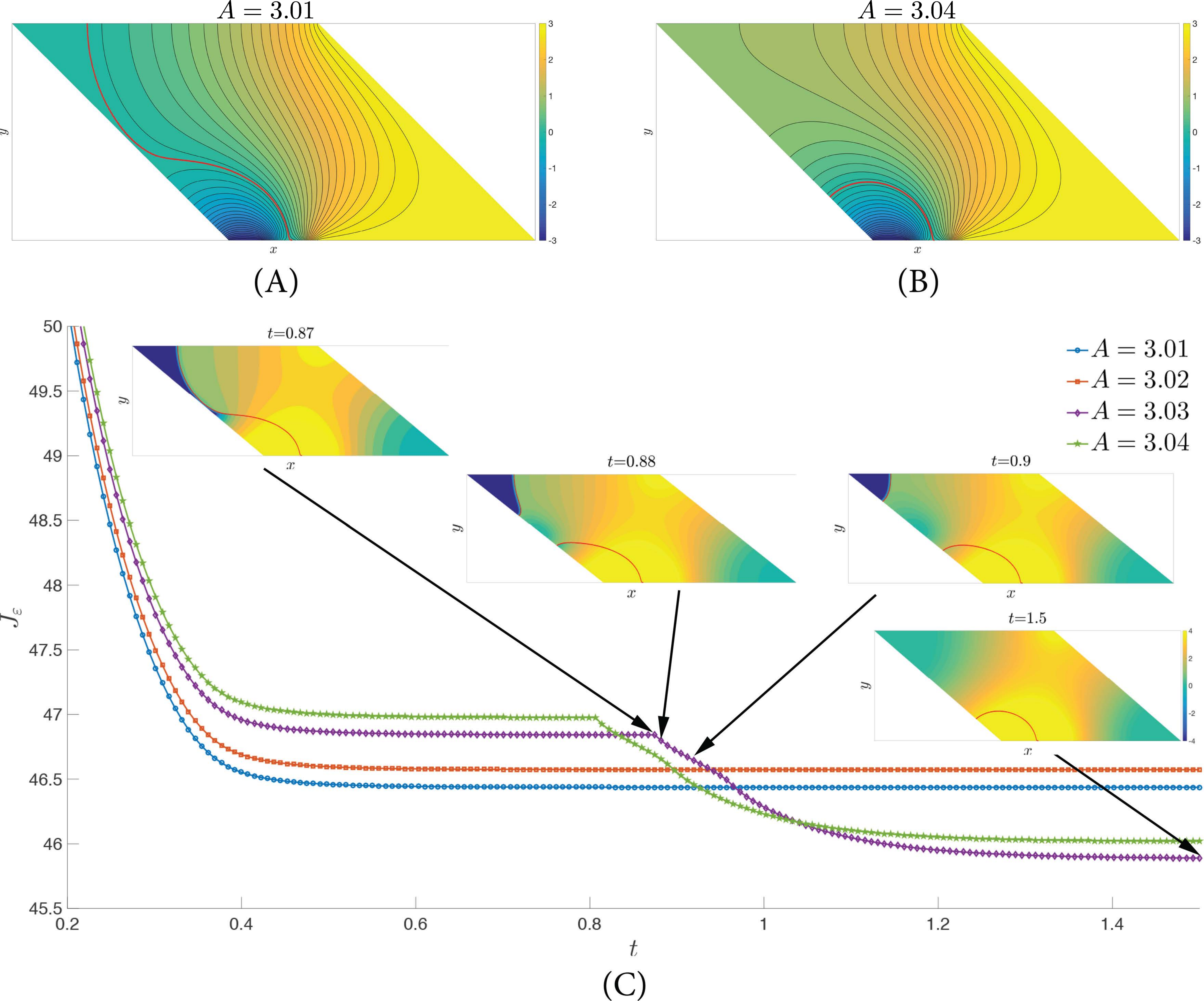}
        \caption{\textbf{(A-B)} Contour plot of numerical solutions to the free boundary problem for $A=3.01$ and $A=3.04$. The solid red curve corresponds to the numerical approximation of the free boundary. \textbf{(C)} Plot of the time evolution of the relaxed energy $J_{\varepsilon}$ under the gradient flow for various values of $A$. The inset figures are time snapshots of the evolution of the free boundary under the gradient flow overlaid on top of a contour plot of $\ln(J_{\varepsilon})$.}
        \label{Fig:Acute}
        \end{center}
\end{figure}

Interestingly, when the free boundary intersects the left Neumann boundary during the gradient flow it splits into two separate curves with a transient ``zero'' phase enclosing the corner point. This zero phase is then rapidly invaded by the positive phase and disappears. During this transient period the energy is rapidly decreasing before reaching a steady state in which the free boundary enclosed the lower left corner of the parallelogram. However, before passing through the Neumann corner the change in the energy is very slow. When viewed as a dynamical system, this type of transition is reminiscent of a saddle node bifurcation in which as $A$ is increased a stable steady state disappears and the system is driven to a separate equilibrium. In particular, the slowing down of the dynamics is likely the result of the ``ghost'' of the previous stable equilibrium. If this is the case, then before the bifurcation there are necessarily at least two steady state solutions to the gradient flow.

\section{Conclusions and Future Work}

In this paper, we have shown the Lipschitz continuity of solutions to a two-phase free boundary problem near a convex Neumann fixed boundary in two dimensions.  We have tested and numerically validated the hypothesis that the free and fixed boundaries should intersect orthogonally in this context.  A major direction of future work will be to validate this analytically. Another important future direction is to generalize to higher dimensions.    

Our numerical experiments indicate that this orthogonality generates interesting behavior near a right or acute angle in the boundary.  In particular, the free boundary, as it approaches a right angle, becomes tangent to the other piece of the angle and, when ultimately it "flips" past the angle, travels a significant distance very quickly.  On the other hand, as the free boundary approaches an acute angle, it cannot approach particularly close and there is a "forbidden" region" due to the need for orthogonality and the energy constraints.  Several interesting questions arise from these observations.  Is the asymmetry in the way the free boundary jumps across a corner an artifact of our numerical scheme or an indication of the presence of multiple local minimizers? Is the zero phase that is generated in the "forbidden" region a numerical artifact?  What general theory can we develop for more general domains than simply parallelograms?  What would a three-dimensional version of these numerics show?  

\bibliographystyle{abbrv} 
\bibliography{ref}

\end{document}